\newtheorem{theorem}{Theorem}
\newtheorem{proposition}[theorem]{Proposition}
\newtheorem{lemma}[theorem]{Lemma}
\newtheorem{definition}[theorem]{Definition}
\newtheorem{corollary}[theorem]{Corollary}
\newtheorem{conjecture}[theorem]{Conjecture}
\newtheorem{remark}[theorem]{Remark}
\title{The running maximum of the Cox-Ingersoll-Ross process
with some properties of the Kummer function}
\author{Stefan Gerhold\thanks{Financial support from the Austrian Science Fund (FWF) under grant P~30750 is gratefully acknowledged. We thank
Veronika Pillwein for helpful comments.}, Friedrich Hubalek \\
TU Wien \\
1040 Vienna, Austria\\
\tt{sgerhold@fam.tuwien.ac.at}\\
\tt{fhubalek@fam.tuwien.ac.at}
 \and
 Richard B.~Paris\\
 Division of Computing and Mathematics,\\
 Abertay University, Dundee DD1 1HG, UK\\
 \tt{r.paris@abertay.ac.uk}
	}
\date{\today}
\numberwithin{equation}{section}
\numberwithin{theorem}{section}
\begin{document}

\maketitle

\begin{abstract}
We derive tail asymptotics for the running maximum of the Cox-Ingersoll-Ross process.
The main result is proved by the saddle point method, where the tail
estimate uses a new monotonicity
property of the Kummer function. This auxiliary result
is established by a computer algebra assisted proof.
Moreover, we analyse the coefficients of the
eigenfunction expansion of the running maximum distribution asymptotically.
\end{abstract}

MSC classes: 33C15, 41A60, 60G17

\section{Introduction}

The Cox-Ingersoll-Ross (CIR) process, also known as Feller diffusion,
is defined by the stochastic differential equation
\begin{equation}\label{eq:cir}
  dX_t = (\alpha-\beta X_t)dt + \sigma \sqrt{X_t}dW_t,
\end{equation}
where $W$ is a standard Brownian motion, and $\alpha,\beta,\sigma,X_0>0.$
This process has been intensively studied and is of particular interest
in mathematical finance, where its mean-reversion property, 
non-negativity and explicit transition density  make
it a popular choice for modelling stock volatility and other quantities~\cite{Gu12,ZhYaHo19}. The main results of this paper,
Corollaries~\ref{cor:main2} and~\ref{cor:main1}, give
asymptotics for  $\mathbb{P}[\max_{0\leq s\leq t}X_s \geq z]$
for fixed~$t$ and large~$z$. This is achieved by a saddle point
approximation of an integral representation involving
the Kummer function
\begin{equation}\label{eq:M series}
 M(a,b,x) = \sum_{n=0}^\infty \frac{(a)_n}{(b)_n\, n!}x^n,
\end{equation}
 where $(a)_n=a(a+1)\cdots (a+n-1)$ is the Pochhammer symbol.
This function satisfies the confluent hypergeometric ODE
\begin{equation}\label{eq:ode}
  x \frac{d^2M}{dx^2} + (b-x)\frac{dM}{dx}-aM =0.
\end{equation}
We refer to 13.7 and 13.8 in~\cite{DLMF,NI10} for many asymptotic
results about this function.  In the proof of our main theorems
we apply two auxiliary results
that may be of independent interest. In Appendix~\ref{se:asympt} we obtain asymptotics of the Kummer function $M(a,b,x)$, where $x\uparrow\infty$ and the parameter~$a$ is proportional
to~$x$. This result is known, but we give a new proof, again using the saddle point
method. Next, we give a computer algebra assisted proof
of the monotonicity of $|M(a,b,x)|$ with respect to $\mathrm{Im}(a)$ in
Appendix~\ref{se:mon}, which is also needed for one of our main results
(Corollary~\ref{cor:main2}). 
In Section~\ref{se:eigen} we
analyse the coefficients of the eigenfunction expansion of the
running maximum distribution. 
Appendix~\ref{se:zeros} contains a new proof of the known
fact that the $a$-zeros of $M(a,b,x)$ are negative and simple
for $b,x>0$.

\section{Tail asymptotics for the running maximum of the CIR process}\label{se:cir}

For any $\varepsilon\in(0,1)$ the scaled CIR process $\varepsilon X$ satisfies
\[
  d(\varepsilon X) = (\varepsilon \alpha-\beta\varepsilon X_t)dt + \sigma
  \sqrt{\varepsilon} \sqrt{\varepsilon X_t}dW_t.
\]
Since $\varepsilon \alpha<\alpha$, it follows from
a standard comparison result (Proposition~5.2.18 in~\cite{KaSh91}) that
 $\varepsilon X$ is almost surely dominated
by $Z^{(\varepsilon)}$ with dynamics 
\[
  dZ^{(\varepsilon)}_t = (\alpha-\beta Z^{(\varepsilon)}_t)dt + \sigma
     \sqrt{\varepsilon Z^{(\varepsilon)}_t}dW_t,
     \quad Z^{(\varepsilon)}_0=X_0.
\]
The family of processes $Z^{(\varepsilon)}$ converges to the
deterministic solution of $dZ^{(0)}_t = (\alpha-\beta Z^{(0)}_t)dt$ and satisfies a large deviations principle
for $\varepsilon\downarrow0$ (Theorem~1.2 in~\cite{BaCa11}). From the contraction
principle (Theorem 4.2.1 in~\cite{DeZe98}) applied to the functional $f\mapsto \max_{[0,t]}f$ it easily follows that
\begin{align}
  \mathbb{P}\Big[\max_{0\leq s\leq t}X_s \geq z\Big]
  &= \mathbb{P}\Big[\max_{0\leq s\leq t}z^{-1}X_s \geq 1\Big] \label{eq:P max} \\
   &\leq 
  \mathbb{P}\Big[\max_{0\leq s\leq t}Z_s^{(1/z)} \geq 1\Big]
  \leq \exp\big({-c}\sigma^{-2}z \big(1+o(1)\big)\big),
    \quad z\uparrow \infty, \label{eq:ldp bound}
\end{align}
where $c>0$ depends on $t,\alpha,\beta$.
This exponential bound was used recently in~\cite{GeGeGu20},
and prompted us to analyse the tail of the running maximum
of the CIR process in more detail, i.e., to determine the asymptotic
behavior of the left-hand side of~\eqref{eq:P max}.

Define the hitting time of level~$z$ by
\[
  \tau_{X_0\to z}:= \inf\{t\geq 0: X_t=z\}.
\]
It is a classical fact that the Laplace transform of a diffusion hitting
time can be expressed by the eigenfunctions of the infinitesimal
generator; see pp.~128--130 in~\cite{ItMc74}. For the CIR process,
these eigenfunctions are Kummer functions; we refer to~\cite{ChLi06} for details.
Corollary~4 of that paper states that
\[
  \mathbb{E}\big[e^{-s\tau_{X_0\to z}}\big] =
  \frac{M(s/\beta,2\alpha/\sigma^2,2\beta X_0/\sigma^2)}{M(s/\beta,2\alpha/\sigma^2,2\beta z/\sigma^2)}
\]
for $0<X_0<z$. 
By Laplace inversion, the law of the running maximum 
of the CIR process can be expressed as
\begin{align}
  \mathbb{P}\Big[\max_{0\leq s\leq t}X_s \geq z\Big] &=  \mathbb{P}[ \tau_{X_0\to z}\leq t] \label{eq:tail tau}\\
  &= \frac{1}{2\pi i}\int_{1-i\infty}^{1+i\infty} \frac{e^{t s}}{s}
  \frac{M(s/\beta,2\alpha/\sigma^2,2\beta X_0/\sigma^2)}{M(s/\beta,2\alpha/\sigma^2,2\beta z/\sigma^2)}ds. \label{eq:lap}
\end{align}
The main results of the present paper, namely Corollaries~\ref{cor:main2}
and~\ref{cor:main1} below, give asymptotics of this probability for
fixed~$t$ and large~$z$.
To simplify notation, we define
\begin{equation}\label{eq:def I}
  I(\lambda,b,x,y) := \frac{1}{2\pi i} \int_{1-i\infty}^{1+i\infty} \frac{e^{\lambda s}}{s}
  \frac{M(s,b,y)}{M(s,b,x)}ds
\end{equation}
for $0<y<x$ and $b,\lambda>0$, so that
\begin{equation}\label{eq:P I}
  \mathbb{P}\Big[\max_{0\leq s\leq t}X_s \geq z\Big] =
  I\Big(\beta t,\frac{2\alpha}{\sigma^2},\frac{2\beta z}{\sigma^2}, \frac{2\beta X_0}{\sigma^2}\Big).
\end{equation}
One of our main results (Theorem~\ref{thm:main2}
and its Corollary~\ref{cor:main2}) will be proven
conditionally, assuming the following statement.
\begin{conjecture}\label{conj}
  Let $b,u_0>0$ and $x>y>0.$ Then
  \begin{equation}\label{eq:quot}
    v\in\mathbb R_+\mapsto \bigg| \frac{M\big((u_0+iv)x,b,y\big)}{M\big((u_0+iv)x,b,x\big)} \bigg|
    \quad \text{decreases.}
  \end{equation}
\end{conjecture}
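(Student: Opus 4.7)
The plan is to reduce the conjectured monotonicity to a domain monotonicity for the negative $a$-zeros of the Kummer function $M(\cdot,b,z)$, by means of a Hadamard product in the variable~$a$.

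First I use that, for fixed $b,z>0$, the entire function $a\mapsto M(a,b,z)$ has order~$\tfrac12$. This follows from Kummer's transformation together with the Bessel-type asymptotic
\[
 M(a,b,z)\;\sim\;\Gamma(b)\,e^{z/2}(-az)^{(1-b)/2}\,J_{b-1}\bigl(2\sqrt{-az}\bigr),\qquad a\to-\infty,
\]
which further yields the zero density $|a_n^{(z)}|\sim j_{n,b-1}^{2}/(4z)$ for the real, negative, simple zeros $\{a_n^{(z)}\}_{n\ge 1}$ established in Appendix~\ref{se:zeros}. Since $M(0,b,z)=1$, $\sum 1/|a_n^{(z)}|<\infty$, and an entire function of order $\le\tfrac12$ without zeros must be constant, the Weierstrass-Hadamard factorization collapses to the genus-$0$ canonical product
\[
 M(a,b,z)=\prod_{n\ge 1}\biggl(1+\frac{a}{\lambda_n^{(z)}}\biggr),\qquad \lambda_n^{(z)}:=-a_n^{(z)}>0,
\]
and logarithmic differentiation gives $M_a(a,b,z)/M(a,b,z)=\sum_{n\ge 1}(\lambda_n^{(z)}+a)^{-1}$.

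Specialising to $a=(u_0+iv)x$ and extracting the imaginary part of $M_a/M$ at argument~$y$ minus that at argument~$x$ yields
\[
\frac{\partial}{\partial v}\log\left|\frac{M((u_0+iv)x,b,y)}{M((u_0+iv)x,b,x)}\right|^{2}=-\,2v x^{2}\sum_{n\ge 1}\left[\frac{1}{(\lambda_n^{(x)}+u_0 x)^{2}+v^{2}x^{2}}-\frac{1}{(\lambda_n^{(y)}+u_0 x)^{2}+v^{2}x^{2}}\right].
\]
Consequently the conjecture is \emph{equivalent} to the inequality $\lambda_n^{(y)}\ge\lambda_n^{(x)}$ for every $n\ge 1$ and every $0<y\le x$. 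This last statement is the classical domain monotonicity of Sturm-Liouville eigenvalues: $\lambda_n^{(z)}$ is the $n$-th eigenvalue of the CIR infinitesimal generator restricted to $(0,z)$ with absorbing condition at~$z$, and any admissible test function on $(0,y)$ extends by zero to an admissible test function on $(0,x)$, so the min-max characterization gives $\lambda_n^{(y)}\ge\lambda_n^{(x)}$. A purely analytic alternative is to differentiate $M(a_n(z),b,z)=0$ implicitly, obtaining $a_n'(z)=-(a_n/b)\,M(a_n+1,b+1,z)/M_a(a_n,b,z)$, and to track its sign using the sign pattern of $M_a$ at the $a$-zeros developed in Appendix~\ref{se:zeros}.

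The main obstacle is the rigorous passage to the genus-$0$ Hadamard product: one has to verify the order-$\tfrac12$ claim from asymptotics of $M(a,b,z)$ along \emph{all} rays $a\to\infty$, not only along the negative real axis, and to exclude any exponential multiplier. A secondary obstacle, which I expect to be routine given the $n^{-4}$ decay of the summands, is the interchange of $\partial_v$ with the infinite series in the displayed formula above.
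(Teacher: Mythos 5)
The paper does not prove this statement at all: it is precisely Conjecture~\ref{conj}, left open by the authors and used only as a hypothesis in Theorem~\ref{thm:main2} and Corollary~\ref{cor:main1}; what the paper establishes is the monotonicity of the denominator alone (Corollary~\ref{cor:mon} in Appendix~\ref{se:mon}). So there is no paper proof to compare against, and the question is whether your argument stands on its own. As written it does not, although the route is genuinely promising and, if completed, would make Theorem~\ref{thm:main2} and Corollary~\ref{cor:main1} unconditional.

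The reduction is sound and can even be simplified: once one has the genus-zero product $M(a,b,z)=\prod_{n\ge1}\bigl(1+a/\lambda_n^{(z)}\bigr)$ and the inequality $\lambda_n^{(y)}\ge\lambda_n^{(x)}$ for $0<y\le x$, there is no need to differentiate an infinite series, since for $s=(u_0+iv)x$ each factor modulus $|\gamma_n+s|/|\delta_n+s|$ with $\gamma_n=\lambda_n^{(y)}\ge\delta_n=\lambda_n^{(x)}$ and $\mathrm{Re}(s)=u_0x>0$ is nonincreasing in $v\ge0$, and a locally uniformly convergent product of positive nonincreasing factors is nonincreasing; this disposes of your ``secondary obstacle''. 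The genuine gaps are exactly the two inputs. First, the Hadamard step: the order-$\tfrac12$ claim must be verified in \emph{all} directions of the complex $a$-plane (your Bessel asymptotic covers only $a\to-\infty$), and the crude series bound only gives order $1$; you flag this yourself as the main obstacle, so as submitted the factorization is an assertion, not a lemma. Second, the zero monotonicity: identifying $-a_n^{(z)}$ with Dirichlet-type eigenvalues of the CIR generator on $(0,z)$ requires care at the singular endpoint $0$ (especially for $0<b<1$) and a precise form domain for the min--max argument; alternatively, your implicit-differentiation route can be closed cleanly via the identity at the end of Appendix~\ref{se:zeros}, which gives $M_a\,M_x<0$ at an $a$-zero and hence $a_n'(z)=-M_x/M_a>0$, but one must still track labels (simple zeros cannot collide, new zeros can only enter from $-\infty$) to conclude $\lambda_n^{(y)}\ge\lambda_n^{(x)}$ for each fixed $n$. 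Also, your claim that the conjecture is \emph{equivalent} to this inequality overstates matters: the displayed identity yields sufficiency only. Finally, both missing inputs amount to Kent's eigenvalue expansion for diffusion hitting times, i.e.\ reference~\cite{Ke80} already cited in Section~\ref{se:eigen}, which provides the canonical product representation of $\mathbb{E}\bigl[e^{-s\tau}\bigr]=M(s,b,y)/M(s,b,x)$ over the two spectra together with the interlacing $\delta_n\le\gamma_n\le\delta_{n+1}$; verifying Kent's hypotheses for the CIR generator, or proving the two inputs directly, is what remains to turn your sketch into a proof.
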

While we did not succeed in proving this conjecture, note
that a related inequality is established in Corollary~\ref{cor:mon},
namely that the denominator of~\eqref{eq:quot} increases with respect to $v>0$.
Define
\begin{align}
 \phi(u) &:= \lambda u - \psi\big( t_0(u) \big) \label{eq:def phi} \\
  &= \lambda u -\frac{1+\sqrt{1+4u}}{2}
     -u \log \Big( \frac{\sqrt{1+4u}+1}{\sqrt{1+4u}-1} \Big). \notag
\end{align}
For the definition of $\psi$ and $t_0=t_0(u)$, we refer
to~\eqref{eq:psi}, \eqref{eq:def psi} and~\eqref{eq:def t0} below.
It is easy to verify that
\begin{equation}\label{eq:phi u0}
  \phi'(u_0)=0 \quad \text{for} \quad
  u_0 := \big( 4 \sinh^2 (\tfrac12 \lambda) \big)^{-1},
\end{equation}
as well as
\begin{align*}
  \phi(u_0) &= -\tfrac12\big(1+\coth (\tfrac12 \lambda)\big), \\
  \phi''(u_0)&= u_0^{-1} \tanh(\tfrac12 \lambda),\\
  \sqrt{1+4u_0}&= \coth(\tfrac12 \lambda).
\end{align*}
In addition, we define
\begin{equation}\label{eq:def c1}
  C_1 := \frac{u_0^{b-3/2}}{\Gamma(b)\sqrt{ \phi''(u_0)}} \Big(\frac{\sqrt{1+4u_0}-1}{2}\Big)^{-b}
      (1+4u_0)^{1/4} 
\end{equation}
and
\begin{multline*}
  C_2 := \frac{u_0^{b/2-5/4}}{\sqrt{2\pi \phi''(u_0)}} \Big(\frac{\sqrt{1+4u_0}-1}{2}\Big)^{-b}
      (1+4u_0)^{1/4}e^{y/2} y^{-b/2+1/4} \\
      \times \exp\big(\tfrac12 \phi''(u_0)yu_0(1+4u_0)-y\sqrt{1+4u_0} \big).
\end{multline*}
Both of these quantities are constants, because~$C_2$ is used in a result where~$y>0$ is constant.
\begin{theorem}\label{thm:main1}
     Let $\lambda,b>0$, and let $y=y(x)>0$ be a function of~$x$
     satisfying $y(x) = o\big((x\log x)^{-1}\big)$ for $x\uparrow \infty$. Then
     the integral in~\eqref{eq:def I} satisfies
   \begin{equation*}
       I(\lambda,b,x,y) \sim
  C_1   x^{b-1}\exp\big(x\phi(u_0)\big) 
   \end{equation*}
   as $x\uparrow\infty$.
\end{theorem}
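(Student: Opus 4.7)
The plan is to apply the saddle point method to \eqref{eq:def I} with $x$ as the large parameter. After substituting $s = ux$, the integral becomes
\begin{equation*}
  I(\lambda,b,x,y) = \frac{1}{2\pi i}\int_{1/x-i\infty}^{1/x+i\infty} \frac{e^{\lambda ux}}{u}\,\frac{M(ux,b,y)}{M(ux,b,x)}\,du .
\end{equation*}
The asymptotic proved in Appendix~\ref{se:asympt}, uniform for $u$ in compact subsets of the right half-plane, has the form $M(ux,b,x) \sim A(u)\,x^{p(b)}\,\exp(x\psi(t_0(u)))$ for an explicit prefactor $A(u)$ and power $p(b)$. Combined with $e^{\lambda ux}$, this produces the function $\phi(u) = \lambda u - \psi(t_0(u))$ of~\eqref{eq:def phi} in the exponent, whose unique stationary point on the positive real axis is $u_0$ from~\eqref{eq:phi u0}.

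The next step is to justify replacing $M(ux,b,y)$ in the numerator by~$1$ to leading order. From \eqref{eq:M series} and the elementary bound $|(ux)_n|\le (|ux|)_n$, one obtains $|M(ux,b,y)-1|\le C\,|ux|\,y\,e^{|ux|y}$ for $u=u_0+iv$. The hypothesis $y=o((x\log x)^{-1})$ makes this estimate uniformly $o(1)$ throughout any range $|v|\le K\log x$. Outside that range the contribution must be shown to be negligible; here I would combine the elementary bound on the numerator with a lower bound on $|M(ux,b,x)|$ along vertical lines, derived either from a Mellin--Barnes representation or from the monotonicity in Corollary~\ref{cor:mon}, which (although weaker than Conjecture~\ref{conj}) says the denominator grows with $|v|$.

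With these reductions in place, the standard steepest descent estimate along the vertical line through $u_0$ yields
\begin{equation*}
  I(\lambda,b,x,y)\sim \frac{g(u_0)}{\sqrt{2\pi x\,\phi''(u_0)}}\,\exp(x\phi(u_0)),
\end{equation*}
where $g(u_0)$ collects $1/u_0$ from the original $1/s$ and the reciprocal of the Kummer prefactor at $u_0$, together with the polynomial $x$-factor from that prefactor. Direct simplification using the closed-form identities in~\eqref{eq:phi u0} for $u_0$ and $\sqrt{1+4u_0}$ reproduces exactly $C_1\,x^{b-1}\,\exp(x\phi(u_0))$. I expect the main technical obstacle to be the uniform tail control mentioned above: turning the formal saddle point estimate into a rigorous asymptotic requires a non-trivial lower bound on $|M(ux,b,x)|$ as $|\mathrm{Im}(u)|\to\infty$ along the shifted contour, which is not available from the standard DLMF asymptotics (those describe the behavior in the argument rather than along a vertical ray in the first parameter $a$).
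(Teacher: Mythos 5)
Your route is essentially the paper's: substitute $s=ux$, integrate along the vertical line through $u_0$, use Theorem~\ref{thm:kummer expans} for the denominator, exploit $y=o\big((x\log x)^{-1}\big)$ to replace $M(ux,b,y)$ by $1$, and finish with a Gaussian evaluation at the saddle; your bookkeeping of the prefactors does lead to $C_1x^{b-1}e^{x\phi(u_0)}$. Your idea of invoking Corollary~\ref{cor:mon} is also the paper's device, but its correct place is the \emph{intermediate} range $x^{-2/5}\le|\mathrm{Im}(u)|\le\log x$: there one bounds $|M(ux,b,x)|$ from below by its value at $\mathrm{Im}(u)=x^{-2/5}$, where the uniform expansion of Theorem~\ref{thm:kummer expans} still applies (it is not uniform once $|\mathrm{Im}(u)|$ is unbounded), and this yields a gain $e^{-cx^{1/5}}$ over a path of length $O(\log x)$.

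The genuine gap is the far tail $|\mathrm{Im}(u)|>\log x$, which you flag as the main obstacle but do not resolve, and your proposed fixes do not close it. Monotonicity alone gives only a $v$-independent lower bound on the denominator (its value at the edge of the intermediate range), while the numerator still grows like $\exp\big(2\,\mathrm{Re}\sqrt{uxy}\big)=\exp\big(c\sqrt{vxy}\big)$ and the factor $1/|u|$ decays only like $1/v$; with these bounds the tail integral is not even absolutely convergent (for any fixed $x$ with $y>0$, $\int^\infty e^{c\sqrt{vxy}}\,dv/v$ diverges), so negligibility cannot follow. Moreover, your claim that no standard asymptotics cover $a=ux\to\infty$ along a vertical ray in the first parameter is not correct: the large-$a$ Bessel-type expansion (Temme (10.3.51), cf.\ DLMF 13.8) is exactly what the paper's Lemma~\ref{le:tail1} uses, giving $|M(ux,b,x)|=\exp\big(x\sqrt{2v}+O(x)\big)$ for $u=u_0+iv$ with $v$ large, up to algebraic factors; this growth dominates the numerator's $\exp\big(3\sqrt{xyv}\big)$ and produces the bound $\exp\big(-x\log x\,(1+o(1))\big)$ for that part of the contour. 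Without this ingredient, or some substitute lower bound valid along the entire vertical line, your argument is incomplete precisely where you suspect it is.
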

The proof of Theorem~\ref{thm:main1} will be given towards the end
of this section.
It  uses the main results of the appendices
(Theorem~\ref{thm:kummer expans} and Corollary~\ref{cor:mon}). We first prove the following result, where the parameter~$y$
is constant.
Theorems~\ref{thm:main1} and~\ref{thm:main2} give first-order asymptotics.
As usual when applying the saddle point method, providing
further terms of the asymptotic expansion would be a matter of straightforward,
but cumbersome calculations.
\begin{theorem}\label{thm:main2}
  Suppose that Conjecture~\ref{conj} is true.
   Let $\lambda,b,y>0$. Then
   \begin{equation}\label{eq:main}
       I(\lambda,b,x,y) \sim
  C_2 x^{b/2-3/4}\exp\big(x\phi(u_0)+2\sqrt{yu_0x}\big) 
   \end{equation}
   as $x\uparrow\infty$.
\end{theorem}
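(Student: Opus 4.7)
The plan is to push the saddle-point analysis used for Theorem~\ref{thm:main1} one step further, now retaining the contribution of the numerator $M(xu,b,y)$ which, for fixed $y>0$, is no longer asymptotically trivial at the saddle and produces the extra exponential factor $e^{2\sqrt{yu_0x}}$ in~\eqref{eq:main}.

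After the rescaling $s=xu$ and shifting the contour of~\eqref{eq:def I} to the vertical line $\Re u=u_0$, Theorem~\ref{thm:kummer expans} supplies the uniform asymptotic of the denominator $M(xu,b,x)$ on a neighborhood of $u_0$, in the form $A(u)\exp(x\psi(t_0(u)))$, so that the combined exponent becomes $x\phi(u)$. For the numerator $M(xu,b,y)$ with $y>0$ fixed and $\Re(xu)\to\infty$, the classical Bessel-$I$ expansion
\[
M(a,b,y)\sim \Gamma(b)\,2^{b-1}e^{y/2}(ay)^{(1-b)/2}I_{b-1}\bigl(2\sqrt{ay}\bigr),\qquad a\to\infty,
\]
followed by the large-argument asymptotic of $I_{b-1}$, introduces the factor $e^{2\sqrt{yxu}}$ together with an explicit power of $xu$. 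On a neighborhood of $u_0$ the integrand thus reduces to $B(u)\exp\bigl(x\phi(u)+2\sqrt{yxu}\bigr)$ for a smooth amplitude $B(u)$ carrying explicit powers of $x$.

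Since $2\sqrt{yxu}=O(\sqrt{x})$ is subdominant to $x\phi(u)$, the true saddle is displaced from $u_0$ by $-\sqrt{y/(xu_0)}/\phi''(u_0)=O(x^{-1/2})$. A Laplace expansion about this perturbed saddle produces the Gaussian prefactor $1/\sqrt{2\pi x\phi''(u_0)}$ together with an $O(1)$ correction $\tfrac12\phi''(u_0)yu_0(1+4u_0)-y\sqrt{1+4u_0}$ to the exponent, obtained from the Taylor expansion of $x\phi(u)$ and $2\sqrt{yxu}$ about $u_0$ and rewritten with the identities $\phi''(u_0)u_0=\tanh(\lambda/2)$ and $\sqrt{1+4u_0}=\coth(\lambda/2)$. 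Combining this Gaussian factor, the amplitude from Theorem~\ref{thm:kummer expans} evaluated at $u_0$, and the Bessel prefactor $\Gamma(b)\,2^{b-2}\pi^{-1/2}e^{y/2}(xu_0y)^{(1-2b)/4}$ from the numerator yields precisely the constant $C_2$ and the power $x^{b/2-3/4}$.

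The contribution of the contour away from $u_0$ is controlled by Conjecture~\ref{conj}: the monotonicity of $v\mapsto|M((u_0+iv)x,b,y)/M((u_0+iv)x,b,x)|$ on $\mathbb{R}_+$ bounds the quotient along the shifted line by its value at $v=0$, while the factor $1/|u_0+iv|$ ensures integrability in $v$, and $\Re\phi(u_0+iv)<\phi(u_0)$ for $v\neq 0$ forces exponential decay of the main exponent away from the central Gaussian window. The main obstacle is precisely this global tail bound: without Conjecture~\ref{conj} one would have to verify the uniform validity of both Kummer asymptotics on the entire vertical contour, which, unlike in the regime of Theorem~\ref{thm:main1}, cannot be short-circuited by the rapid decay of~$y$. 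This is the reason Theorem~\ref{thm:main2} is proved conditionally.
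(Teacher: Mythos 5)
Your central computation is essentially the paper's: the same rescaling $s=xu$, the same expansions for numerator and denominator (\eqref{eq:num expans}, via the Bessel representation, and \eqref{eq:denom expans} from Theorem~\ref{thm:kummer expans}), and the same perturbed-saddle analysis; your displacement $-\sqrt{y/(xu_0)}/\phi''(u_0)$ agrees with the paper's $-\sqrt{yu_0(1+4u_0)/x}$ because $\phi''(u_0)=u_0^{-1}\tanh(\tfrac12\lambda)$ and $\sqrt{1+4u_0}=\coth(\tfrac12\lambda)$, and it yields the same $O(1)$ exponent correction $\tfrac12\phi''(u_0)yu_0(1+4u_0)-y\sqrt{1+4u_0}$, the Gaussian factor, and hence $C_2$ and the power $x^{b/2-3/4}$ (your powers of $2$ in the Bessel prefactor should be rechecked against \eqref{eq:num expans}, but that is bookkeeping).

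The genuine gap is the tail estimate, and as stated it fails. Bounding the quotient on the whole line $\mathrm{Re}(u)=u_0$ by its value at $v=0$ is too lossy: by \eqref{eq:num expans}--\eqref{eq:denom expans}, that value multiplied by $|e^{\lambda xu}|=e^{\lambda xu_0}$ is of order $x^{b/2-1/4}e^{\chi(u_0,x)}$, which already exceeds the claimed main term $x^{b/2-3/4}e^{\chi(u_0,x)+O(1)}$ by a factor $x^{1/2}$; so even the piece $x^{-2/5}\le v\le 1$ of the tail would swamp the asymptotics under this bound. Moreover, once you have frozen the quotient at $v=0$ there is no decay left to exploit: $|e^{\lambda xu}|$ is constant on the vertical line, the decay expressed by $\mathrm{Re}\,\phi(u_0+iv)<\phi(u_0)$ lives inside $1/|M(ux,b,x)|$, which you have just discarded, and $\int dv/|u_0+iv|$ diverges, so the claim that $1/|u|$ ensures integrability is false. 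What is actually needed (and what the paper does) is a two-regime estimate: on $x^{-2/5}\le v\le\log x$ (Lemma~\ref{le:tail2}) Conjecture~\ref{conj} is used to bound the quotient by its value at $v=x^{-2/5}$, the edge of the central window, where the uniform expansion of Theorem~\ref{thm:kummer expans} still applies and produces the extra factor $\exp(-\tfrac12\phi''(u_0)x^{1/5})$, which beats the path length $\log x$; and for $v>\log x$ (Lemma~\ref{le:tail1}), where $u$ is unbounded and Theorem~\ref{thm:kummer expans} no longer controls the denominator, a separate growth estimate $|M(ux,b,x)|=v^{-b/2}\exp(x\sqrt{2v}+O(x))$ against $|M(ux,b,y)|\le\exp(3\sqrt{xyv})$ yields a bound $\exp(-x\log x(1+o(1)))$. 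Your single monotonicity-at-$v=0$ argument delivers neither of these, so the reduction to the central Gaussian window is not justified in your write-up.
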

\begin{proof}
  We rewrite the integral as
  \begin{align}
   \frac{1}{2\pi i}\int_{1-i\infty}^{1+i\infty}& \frac{e^{\lambda s}}{s}
  \frac{M(s,b,y)}{M(s,b,x)}ds
  =
    \frac{1}{2\pi i}\int_{\hat u-i\infty}^{\hat u+i\infty} \frac{e^{\lambda xu}}{u}
  \frac{M(ux,b,y)}{M(ux,b,x)}du  \notag \\
  &= \frac{1}{2\pi i}\bigg(
    \int_{\hat u-i x^{-2/5}}^{\hat u+ix^{-2/5}}
    + \int_{\substack{\mathrm{Re}(u)=\hat{u} \\ |\mathrm{Im}(u)|>x^{-2/5}}} \bigg)
    \frac{e^{\lambda xu}}{u}
  \frac{M(ux,b,y)}{M(ux,b,x)}du, \label{eq:integrals}
  \end{align}
  where $\hat{u}=\hat{u}(x)>0$ satisfies $\hat{u}(x)\uparrow u_0$
  and   will be fixed later. We will show in Lemmas~\ref{le:tail1}
  and~\ref{le:tail2}
  that the second integral in~\eqref{eq:integrals} is negligible, and focus now on the first integral. For~$u$ in
  its integration range and $x\uparrow \infty$,  the first term
  of the expansion (10.3.51)
  in~\cite{Te15} yields
  \begin{equation}\label{eq:num expans}
    M(ux,b,y) \sim \frac{\Gamma(b)}{\sqrt{2\pi}}\exp\big(\tfrac12y+2\sqrt{uxy}\big) (uxy)^{-b/2+1/4}.
  \end{equation}
  As for the denominator, Theorem~\ref{thm:kummer expans} implies
  \begin{equation}\label{eq:denom expans}
    M(ux,b,x) \sim \frac{\Gamma(b)}{\sqrt{2\pi}}(1+4u)^{-1/4} 
     \Big(\frac{\sqrt{1+4u}-1}{2}\Big)^b
     (ux)^{1/2-b} e^{x \psi(t_0)}.
   \end{equation}
  {}From these estimates, we obtain
  \begin{multline}\label{eq:cp}
    \frac{1}{2\pi i} \int_{\hat u-i x^{-2/5}}^{\hat u+ix^{-2/5}}
      \frac{e^{\lambda xu}}{u}
     \frac{M(ux,b,y)}{M(ux,b,x)}du \\
    \sim
      u_0^{b/2-5/4} \Big(\frac{\sqrt{1+4u_0}-1}{2}\Big)^{-b}
      (1+4u_0)^{1/4}e^{y/2} y^{-b/2+1/4} x^{b/2-1/4} \\
      \times \frac{1}{2\pi i} \int_{\hat u-i x^{-2/5}}^{\hat u+ix^{-2/5}}
      \exp\big( \lambda xu + 2\sqrt{uxy} -x \psi(t_0)\big)du.
  \end{multline}
  We put
  \[
    \chi(u,x) := x \phi(u) + 2\sqrt{uxy},
  \]
  so that $e^\chi$ is the integrand on the right-hand side of~\eqref{eq:cp}.
  We now define $\hat u(x)$ as the saddle point of this integrand, i.e.\ as the positive
  solution of
  \begin{equation}\label{eq:sp eq}
    0=\frac1x \frac{\partial}{\partial u} \chi(u,x) = 
    \lambda - \log\frac{\sqrt{1+4u}+1}{\sqrt{1+4u}-1} + \sqrt{\frac{y}{ux}}.
  \end{equation}
    It is easy to see that there is a unique solution for large~$x$, and that
    it converges to the (constant) saddle point~$u_0$ of $e^{x\phi(u)}$ as $x\uparrow\infty$.
  If we write the integration parameter as $u=\hat u+iv$, then the local
  expansion of~$\chi$ is
  \[
    \chi(u,x) = \chi(\hat{u},x) -\tfrac12 \chi''(\hat{u},x) v^2 + O(x^{-1/5}),
  \]
  where the derivative is with respect to~$u$, and the error term follows from
  $\chi'''(\hat{u},x)=O(x)$ and $v^3=O(x^{-6/5}).$ Now we can evaluate
  the integral in~\eqref{eq:cp} asymptotically:
  \begin{align}
     \frac{1}{2\pi i}\int_{\hat u-i x^{-2/5}}^{\hat u+ix^{-2/5}} &\exp\big(\chi(u,x)\big)du \notag \\
     &\sim
      \exp\big(\chi(\hat{u},x)\big)
     \frac{1}{2\pi }\int_{ -x^{-2/5}}^{x^{-2/5}} 
        \exp\big( {-\tfrac12} \chi''(\hat{u},x) v^2 \big) dv \notag \\
      &\sim
          \frac{\exp\big(\chi(\hat{u},x)\big)}{2\pi \sqrt{\chi''(\hat{u},x)}}
            \int_{ -\infty}^{\infty} e^{-z^2/2} dz 
     =  \frac{\exp\big(\chi(\hat{u},x)\big)}{\sqrt{2\pi \chi''(\hat{u},x)}}. \label{eq:sp main}
  \end{align}
  By inserting an ansatz $\hat{u}=u_0+w$ with $w=o(1)$ into~\eqref{eq:sp eq},
  it is easy to see that
  \[
    \hat{u} = u_0 - \sqrt{\frac{yu_0(1+4u_0)}{ x}}\big(1+O(x^{-1/2})\big).
  \]
  This implies
  \[
    \sqrt{\hat{u} x}= \sqrt{u_0 x}-\tfrac12\sqrt{y(1+4u_0)}+O(x^{-1/2})
  \]
  and (recall that~$u_0$ satisfies $\phi'(u_0)=0$)
  \[
    x\phi(\hat u) = x\phi(u_0) + \tfrac12 \phi''(u_0)yu_0(1+4u_0) +O(x^{-1/2}).
  \]
  We conclude
  \begin{align*}
      \chi(\hat u,x) = x \phi(u_0)+2\sqrt{u_0 xy} + \tfrac12 \phi''(u_0)yu_0(1+4u_0)
      -y\sqrt{1+4u_0}+O(x^{-1/2}).
  \end{align*}
  We insert this and $\chi''(\hat u,x)\sim x\phi''(u_0)$ into~\eqref{eq:sp main},
  and then use the resulting asymptotics in~\eqref{eq:cp}.
  Estimation of the second integral in~\eqref{eq:integrals} by Lemmas~\ref{le:tail1}
  and~\ref{le:tail2} below completes the proof.
  Clearly, it suffices to do the tail estimate for the upper half
  of the integration path.
\end{proof}
\begin{lemma}[Tail estimate for large~$\mathrm{Im}(u)$]\label{le:tail1}
  Let $\lambda,b,y>0.$ Then
  \begin{equation*}
     \bigg| \int_{\hat u+i \log x}^{\hat u+i\infty}
     \frac{e^{\lambda xu}}{u}  \frac{M(ux,b,y)}{M(ux,b,x)}du \bigg| 
     \leq
     \exp\Big( {-x \log x}\big(1+o(1)\big)\Big).
  \end{equation*}
\end{lemma}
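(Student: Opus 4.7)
My plan is to bound the modulus of the integrand pointwise on the contour segment $\{\hat u + iv : v\geq\log x\}$ and then integrate in $v$. Since $|e^{\lambda xu}/u|\leq e^{\lambda x\hat u}/v$ with $\hat u\to u_0$ bounded, the exponential prefactor contributes only a tame factor, so all the required $x$-dependent decay must come from the Kummer-function ratio $|M(ux,b,y)/M(ux,b,x)|$.

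For the numerator, I would apply the Bessel-type asymptotic~\eqref{eq:num expans}: writing $u=\hat u+iv$ with $v\to\infty$, a direct computation in polar form gives $\mathrm{Re}\sqrt{uxy}=O(\sqrt{vx})$, so $|M(ux,b,y)|$ is at most $\exp(O(\sqrt{vx}))$ up to polynomial factors in $|ux|$. Thus the numerator grows only sub-exponentially in $v$.

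For the denominator, I would establish a lower bound of the form $|M(ux,b,x)|\geq \exp(cvx)$ for some constant $c>0$, valid uniformly on the contour segment. Such a bound can be obtained from the classical large-$z$ Kummer asymptotic $M(a,b,z)\sim \Gamma(b)/\Gamma(a)\,z^{a-b}e^z$ combined with Stirling's formula for $|\Gamma(ux)|^{-1}$, which contributes an exponential factor $\exp(\pi vx/2)$ coming from the large imaginary part of $ux$. The monotonicity established in Corollary~\ref{cor:mon} can be invoked to further reinforce this lower bound. Combining these estimates, the Kummer ratio decays exponentially in $v$ at rate proportional to $x$, so the integral over $v\in[\log x,\infty)$ is dominated by its lower endpoint $v=\log x$ and yields a bound of order $\exp(-cx\log x)$, matching the claimed estimate.

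The main obstacle will be to justify the lower bound $|M(ux,b,x)|\geq\exp(cvx)$ rigorously in the regime where both $a=ux$ and $z=x$ are large and $|a|/z=|u|$ itself grows with $x$. The classical large-$z$ Kummer asymptotic is stated for $a$ fixed, while Theorem~\ref{thm:kummer expans}, though tailored to this problem, only yields the smaller lower bound $\exp(x\sqrt{2v})$ arising from the expansion $\psi(t_0(u))\sim 2\sqrt u$ for large $|u|$. Bridging these two regimes — for instance via a uniform asymptotic expansion in both parameters, or a sectorial Stokes-phenomenon analysis near the positive imaginary direction — is the crux of the proof, and care is also needed to ensure that all implicit constants and polynomial error terms in the expansions~\eqref{eq:num expans} and~\eqref{eq:denom expans} remain controlled uniformly in $v$ over the unbounded range.
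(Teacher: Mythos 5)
Your overall strategy---bound the integrand pointwise by an upper estimate for $|M(ux,b,y)|$ divided by a lower estimate for $|M(ux,b,x)|$, then integrate in $v=\mathrm{Im}(u)$---is the same as the paper's, and your treatment of the numerator ($|M(ux,b,y)|\leq\exp(O(\sqrt{xv}))$ via~\eqref{eq:num expans}) agrees with it. The gap is in the denominator. The lower bound $|M(ux,b,x)|\geq\exp(cvx)$ that you identify as the crux is not merely unproven but false in the relevant regime: on this contour segment the first parameter $a=ux$ has $|a|\geq x\log x\gg x=z$, so one is in the large-$a$ (Bessel-type) regime, not the large-argument regime, and there $M(a,b,z)$ grows only like $\exp(2\,\mathrm{Re}\sqrt{az})$ up to algebraic factors. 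With $a=ux$, $z=x$, $u=\hat u+iv$ and $v$ large this gives $|M(ux,b,x)|=v^{-b/2}\exp\big(x\sqrt{2v}+O(x)\big)$, i.e.\ growth of order $\exp(\mathrm{const}\cdot x\sqrt v)$, not $\exp(cvx)$. The classical formula $M(a,b,z)\sim\Gamma(b)e^zz^{a-b}/\Gamma(a)$ requires $z\to\infty$ with $a$ fixed (or much smaller than $z$) and cannot be invoked here; and Corollary~\ref{cor:mon} only yields $|M(ux,b,x)|\geq|M(\hat ux,b,x)|=e^{O(x)}$, which contains no decay in $v$ at all.

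The second problem is that you dismiss the bound $\exp(x\sqrt{2v}+O(x))$ as too weak, when it is exactly what the paper uses (via (10.3.51) in~\cite{Te15}) and it suffices. The numerator grows at most like $\exp(3\sqrt{xyv})$, and $x\sqrt{2v}$ exceeds $3\sqrt{xyv}$ by a factor of order $\sqrt x$, uniformly in $v\geq\log x$; hence the ratio decays like $\exp(-x\sqrt{2v}(1+o(1)))$ and the integral over $[\log x,\infty)$ is superexponentially small in $x$, which is all that the proof of Theorem~\ref{thm:main2} requires. So the ``bridging of regimes'' you flag as the main obstacle is not needed; the key observation is simply the mismatch between $x\sqrt v$ in the denominator's exponent and $\sqrt{xv}$ in the numerator's. (A side remark: integrating $\exp(-x\sqrt{2v})$ from $v=\log x$ gives $\exp(-x\sqrt{2\log x}(1+o(1)))$ rather than the stated $\exp(-x\log x(1+o(1)))$; this discrepancy is present in the paper's own final substitution step as well, and is harmless for the application, since any bound of the form $\exp(-x\,\omega(x))$ with $\omega(x)\to\infty$ is negligible against the saddle-point term.)
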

\begin{proof}
Recall that the saddle point~$\hat u=\hat u(x)$ was defined in~\eqref{eq:sp eq}.
  By (10.3.51) in~\cite{Te15}, we have
  \[
    M(ux,b,x) = v^{-b/2} \exp\big(x\sqrt{2v}+O(x)\big), 
  \]
  where we write $u=\hat{u}+iv$ again. From~\eqref{eq:num expans}, we get
  \begin{align*}
     |M(ux,b,y)| &=\exp\big(2\mathrm{Re}\sqrt{uxy}+O(\log|ux|)\big) \\
     & \leq \exp\big(3\sqrt{xyv}\big)
  \end{align*}
  for large~$x$.
  We can thus estimate the integral by
  \begin{align*}
    e^{O(x)}\int_{\log x}^\infty v^{b/2}&\exp\big(
      {-x\sqrt{2v}}+3\sqrt{xyv}\big)dv 
      \leq  e^{O(x)}\int_{\log x}^\infty e^{-x\sqrt{v}}dv \\
      &= e^{O(x)}\int_{\log(x/2)}^\infty e^{-xz}zdz \\
      &\leq e^{O(x)}\int_{\log(x/2)}^\infty e^{-z(x-1)}dz \\
      &= \exp\Big( {-x}\log x\big(1+o(1)\big)\Big). \qedhere
  \end{align*}
\end{proof}
The final estimate for the proof of Theorem~\ref{thm:main2}
is provided by the following lemma. Note that
the exponential factor $\exp(-cx^{1/5})$ is negligible compared
to the power of~$x$ in~\eqref{eq:main}.
\begin{lemma}[Tail estimate for intermediate~$\mathrm{Im}(u)$]\label{le:tail2}
  Suppose that Conjecture~\ref{conj} is true.
  Let $\lambda,b,y>0$. Then there is a positive constant~$c$ such that
  \begin{equation*}
     \bigg| \int_{\hat u+i x^{-2/5}}^{\hat u+i\log x}
     \frac{e^{\lambda xu}}{u}  \frac{M(ux,b,y)}{M(ux,b,x)}du \bigg| 
     \leq
     \exp\Big( \chi(u_0,x) -c x^{1/5} + o(x^{1/5})\Big).
  \end{equation*}
\end{lemma}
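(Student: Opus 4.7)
The plan is to combine Conjecture~\ref{conj} with the Kummer asymptotics \eqref{eq:num expans}--\eqref{eq:denom expans} to reduce the integral to a one-point estimate at $u_* := \hat u + i x^{-2/5}$, and then to extract the required exponential savings from the quadratic behaviour of the phase $\chi$ near its saddle $\hat u$.

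First, I would apply Conjecture~\ref{conj} with its parameter $u_0$ replaced by $\hat u(x) > 0$ (legitimate for large~$x$, since $\hat u>0$ and $x>y$). It then asserts that $v \mapsto |M((\hat u+iv)x, b, y)/M((\hat u+iv)x, b, x)|$ is non-increasing on $[0,\infty)$. Together with $|e^{\lambda xu}| = e^{\lambda x \hat u}$ (constant on the vertical path), $|1/u| = O(1)$, and the path length $O(\log x)$, this yields
\[
\bigg| \int_{\hat u + ix^{-2/5}}^{\hat u + i\log x} \frac{e^{\lambda xu}}{u} \frac{M(ux, b, y)}{M(ux, b, x)}\,du \bigg|
\leq O(\log x) \cdot e^{\lambda x \hat u} \cdot \bigg| \frac{M(u_* x, b, y)}{M(u_* x, b, x)} \bigg|.
\]

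Next, I would substitute the expansions \eqref{eq:num expans} and \eqref{eq:denom expans} at $u_*$; both (Temme's (10.3.51) in~\cite{Te15} and Theorem~\ref{thm:kummer expans}) are uniform in complex sectors containing a neighbourhood of the positive real axis, so they apply at $u_*$ for large $x$. After multiplying by $e^{\lambda x \hat u}$ and collecting the algebraic prefactors, which are of size $x^{O(1)}$, the right-hand side of the displayed bound becomes $x^{O(1)} \exp(\mathrm{Re}\,\chi(u_*, x))$, with $\chi(u,x) = x\phi(u) + 2\sqrt{uxy}$ as in the proof of Theorem~\ref{thm:main2}.

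It remains to Taylor expand $\chi(\cdot, x)$ about $\hat u$ in the imaginary direction. Since $\chi'(\hat u, x) = 0$ by~\eqref{eq:sp eq}, $\chi''(\hat u, x) \sim x\phi''(u_0)>0$, and higher derivatives of $\chi(\cdot,x)$ near $\hat u$ are $O(x)$, we obtain
\[
\mathrm{Re}\,\chi(u_*, x) = \chi(\hat u, x) - \tfrac{1}{2} \phi''(u_0)\,x^{1/5} + o(x^{1/5}).
\]
The computations in the proof of Theorem~\ref{thm:main2} already give $\chi(\hat u, x) = \chi(u_0, x) + O(1)$, so the lemma follows with any $c < \tfrac{1}{2}\phi''(u_0)$; the $O(\log x)$ path length and the $x^{O(1)}$ algebraic prefactors are absorbed into the $o(x^{1/5})$ error. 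The only non-routine input is the monotonicity of Conjecture~\ref{conj} itself (the central obstacle of the paper), which is taken as a hypothesis here; verifying uniformity of the Kummer expansions at $u_*$ is standard.
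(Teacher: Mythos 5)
Your proposal is correct and follows essentially the same route as the paper: invoke Conjecture~\ref{conj} to reduce the integral to its value at the endpoint $\hat u+ix^{-2/5}$, insert the expansions \eqref{eq:num expans} and \eqref{eq:denom expans}, and extract the $-\tfrac12\phi''(u_0)x^{1/5}$ saving from the quadratic term of the phase at the saddle (the paper expands $\mathrm{Re}\,\phi$ and the numerator separately rather than the combined $\chi$, but this is the same computation). The paper takes $c=\tfrac12\phi''(u_0)$, absorbing the path length and algebraic prefactors into the error term exactly as you do.
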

\begin{proof}
  By assumption (Conjecture~\ref{conj}), $|M(ux,b,y)/M(ux,b,x)|$ is a decreasing
  function of $\mathrm{Im}(u)$. The integrand thus satisfies
  \begin{equation}\label{eq:frac est}
     \bigg|\frac{e^{\lambda xu}}{u}  \frac{M(ux,b,y)}{M(ux,b,x)} \bigg|
     \leq 
     e^{\lambda x\hat u+O(1)}
     \bigg|  \frac{M(ux,b,y)}{M(ux,b,x)} \bigg|_{u=\hat u+i x^{-2/5}}.
  \end{equation}
  We know from~\eqref{eq:denom expans}, the definition of~$\phi,$
  and $\phi'(u_0)=0$ that
  \begin{align}
     |M(ux,b,x)|\big|_{u=\hat u+i x^{-2/5}} &= 
     \exp\big(\lambda \hat u x
     -x\,\mathrm{Re}\, \phi(u+i x^{-2/5}) + O(\log x)\big) \notag \\
 &=\exp\big(\lambda \hat u x -x\phi(u_0)+\tfrac12 \phi''(u_0)x^{1/5}+O(1)\big).\label{eq:denom est}
  \end{align}
  By~\eqref{eq:num expans}, we have
  \begin{align}
     |M(ux,b,&y)|\big|_{u=\hat u+i x^{-2/5}} = 
     \exp\big(2 \mathrm{Re} \sqrt{uxy}+O(\log x)\big)\big|_{u=\hat u+i x^{-2/5}}\notag \\
 &=\exp\Big(2\sqrt{u_0 xy}\,\mathrm{Re}\sqrt{1+iu_0^{-1}x^{-2/5}+O(x^{-1/2})}+O(\log x)\Big) \notag \\
 &=\exp\big(2\sqrt{u_0 xy}+O(\log x)\big). \label{eq:num est}
  \end{align}
  Formulas \eqref{eq:frac est}--\eqref{eq:num est} imply
  \[
     \bigg|\frac{e^{\lambda xu}}{u}  \frac{M(ux,b,y)}{M(ux,b,x)} \bigg|
     \leq 
     \exp\big( \chi(u_0,x)-\tfrac12\phi''(u_0)x^{1/5} + O(x^{1/10})\big).
  \]
  The assertion is established, with $c=\tfrac12\phi''(u_0)>0,$
  by multiplying this estimate for the integrand
  with the length of the integration path.
\end{proof}

\begin{proof}[Proof of Theorem~\ref{thm:main1}]
   This proof is a simplified variant of the proof of
   Theorem~\ref{thm:main2}, which we have just completed.
   Instead of $\mathrm{Re}(u)=\hat u(x),$
   we integrate over the line $\mathrm{Re}(u)=u_0$, which does not depend on~$x$.
   Lemma~\ref{le:tail1} and its proof need no modification except
   replacing~$\hat u$ by~$u_0$.
   By (10.3.51) in~\cite{Te15}, we have
   \[
     M(ux,b,y) \sim \Gamma(b)(uxy)^{(1-b)/2}
     I_{b-1}(2\sqrt{uxy}), \quad x\uparrow\infty,
   \]
   where~$I_{\nu}$ is the modified Bessel function.
   For $0\leq\mathrm{Im}(u)\leq \log x$, this implies $M(ux,b,y) \sim 1,$
   since $I_{b-1}(z)\sim 2^{1-b}z^{b-1}/\Gamma(b)$ for $z\to0$. This can be used
   to adapt the proof of Lemma~\ref{le:tail2}. The estimate~\eqref{eq:frac est}
   becomes
   \[
   \bigg|\frac{e^{\lambda xu}}{u}  \frac{M(ux,b,y)}{M(ux,b,x)} \bigg|
     \leq     
       \frac{ e^{\lambda x u_0+O(1)}}{|M\big((u_0+i x^{-2/5})x,b,x\big)|},
   \]
   where we have applied the main result of Appendix~\ref{se:mon}
   (Corollary~\ref{cor:mon}). From this, it easily follows that this part of the tail
   satisfies
   \begin{equation*}
     \bigg| \int_{u_0+i x^{-2/5}}^{u_0+i\log x}
     \frac{e^{\lambda xu}}{u}  \frac{M(ux,b,y)}{M(ux,b,x)}du \bigg| 
     \leq
     \exp\Big( x \phi(u_0) -c x^{1/5} + o(x^{1/5})\Big).
  \end{equation*}
  It remains to approximate the central part of the integral. Using
  $M(ux,b,y) \sim 1$ again, we obtain
   \begin{multline*}
    \frac{1}{2\pi i} \int_{u_0-i x^{-2/5}}^{u_0+ix^{-2/5}}
      \frac{e^{\lambda xu}}{u}
     \frac{M(ux,b,y)}{M(ux,b,x)}du \\
    \sim
      \frac{\sqrt{2\pi}}{\Gamma(b)}u_0^{b-3/2} \Big(\frac{\sqrt{1+4u_0}-1}{2}\Big)^{-b}
      (1+4u_0)^{1/4}x^{b-1/2} \\
      \times \frac{1}{2\pi i} \int_{u_0-i x^{-2/5}}^{u_0+ix^{-2/5}}
      \exp\big( \lambda xu  -x \psi(t_0)\big)du.
  \end{multline*}
  The proof is now completed analogously to the proof of
  Theorem~\ref{thm:main2}. By~\eqref{eq:def phi} and~\eqref{eq:phi u0}, the exponent of the integrand has the expansion
  \begin{align*}
      \lambda xu  -x \psi\big(t_0(u)\big) &= x\phi(u) \\
      &= x\phi(u_0)-\tfrac12 \phi''(u_0)xv^2+O(x^{-1/5}),
  \end{align*}
  which implies
  \[
    \frac{1}{2\pi i} \int_{u_0-i x^{-2/5}}^{u_0+ix^{-2/5}}
      \exp\big( \lambda xu  -x \psi(t_0)\big)du
      \sim
      \frac{\exp\big(x \phi(u_0)\big)}{\sqrt{2\pi \phi''(u_0)x}}. \qedhere
  \]
\end{proof}
Now we return to the problem on CIR processes raised at the beginning of this section.
Define (see~\eqref{eq:def c1})
\[
  \hat{C}_1 := C_1\big|_{\lambda=\beta t,\, b=2\alpha/\sigma^2}
\]
and
\[
  \hat{C}_2 := C_2\big|_{\lambda=\beta t,\, b=2\alpha/\sigma^2,\, y=2\beta X_0/\sigma^2}.
\]
\begin{corollary}\label{cor:main2}
Let $\alpha,\beta,\sigma,t>0,$ and let $X_0=X_0(z)>0$ be a function
of $z$ that satisfies $X_0=o\big((z\log z)^{-1}\big)$ as $z\uparrow\infty$.
Then the CIR process defined in~\eqref{eq:cir}
  satisfies
 \begin{multline*}
    \mathbb{P}\Big[\max_{0\leq s\leq t}X_s \geq z\Big]
    \sim
    \hat{C}_1\Big(\frac{2\beta z}{\sigma^2}\Big)^{2\alpha/\sigma^2-1}
    \exp\Big(
      {-\frac{\beta}{\sigma^2}\big(1+\coth(\tfrac12 \beta t)\big)z}
        \Big), \quad z\uparrow \infty.
  \end{multline*}
\end{corollary}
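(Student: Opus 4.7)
The plan is to derive Corollary~\ref{cor:main2} as an immediate consequence of Theorem~\ref{thm:main1} applied to the integral representation~\eqref{eq:P I}. Concretely, I would make the substitutions
\[
  \lambda = \beta t, \quad b = \frac{2\alpha}{\sigma^2}, \quad x = \frac{2\beta z}{\sigma^2}, \quad y = \frac{2\beta X_0}{\sigma^2},
\]
so that $z\uparrow\infty$ is equivalent to $x\uparrow\infty$. Because $\log(2\beta z/\sigma^2) \sim \log z$, the hypothesis $X_0(z) = o\big((z\log z)^{-1}\big)$ translates exactly into $y(x) = o\big((x\log x)^{-1}\big)$, which is the admissibility condition of Theorem~\ref{thm:main1}. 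This is the only non-trivial bookkeeping step; it must be flagged because the result is sensitive to the growth of $y$.

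Next I would invoke Theorem~\ref{thm:main1} to obtain
\[
  I(\lambda,b,x,y) \sim C_1\, x^{b-1}\exp\big(x\phi(u_0)\big).
\]
Since $C_1$ depends only on $\lambda$ and $b$, under the substitution it becomes the constant $\hat C_1$ defined just before the statement of the corollary, and $x^{b-1}$ becomes $(2\beta z/\sigma^2)^{2\alpha/\sigma^2-1}$. Using the closed form $\phi(u_0) = -\tfrac12\big(1+\coth(\tfrac12\lambda)\big)$ recorded after~\eqref{eq:phi u0}, the exponent simplifies to
\[
  x\phi(u_0) = -\frac{\beta}{\sigma^2}\big(1+\coth(\tfrac12 \beta t)\big)\,z,
\]
which is precisely the exponential decay appearing in the corollary.

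There is no serious obstacle to overcome. All of the analytic substance, namely the saddle point analysis, the tail bounds from Lemmas~\ref{le:tail1}--\ref{le:tail2}, and the uniform Kummer asymptotics from Theorem~\ref{thm:kummer expans}, is already packaged inside Theorem~\ref{thm:main1}. Note also that, because $X_0 \to 0$ is assumed, the monotonicity hypothesis (Conjecture~\ref{conj}) is \emph{not} required here; one uses only Corollary~\ref{cor:mon}, as already exploited in the proof of Theorem~\ref{thm:main1}. Thus the corollary follows by substitution alone, once the translation of hypotheses on $X_0$ versus $y$ has been checked.
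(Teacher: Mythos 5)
Your proposal is correct and follows the same route as the paper, whose proof is exactly the substitution of $\lambda=\beta t$, $b=2\alpha/\sigma^2$, $x=2\beta z/\sigma^2$, $y=2\beta X_0/\sigma^2$ into Theorem~\ref{thm:main1} via~\eqref{eq:P I}; your extra bookkeeping (translating $X_0=o((z\log z)^{-1})$ into $y=o((x\log x)^{-1})$ and simplifying $x\phi(u_0)$) is accurate, as is your remark that Conjecture~\ref{conj} is not needed here.
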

\begin{proof}
  Immediate from~\eqref{eq:P I} and
  Theorem~\ref{thm:main1}.
\end{proof}
Analogously, using Theorem~\ref{thm:main2}, we get the following result.
\begin{corollary}\label{cor:main1}
  Suppose that Conjecture~\ref{conj} is true. 
  Let $\alpha,\beta,\sigma,X_0,t>0.$ Then the CIR process
  satisfies
  \begin{multline*}
    \mathbb{P}\Big[\max_{0\leq s\leq t}X_s \geq z\Big]
    \sim
    \hat{C}_2\Big(\frac{2\beta}{\sigma^2}\Big)^{\alpha/\sigma^2-3/4}
    z^{\alpha/\sigma^2-3/4} \\
    \times\exp\bigg(
      {-\frac{\beta}{\sigma^2}\big(1+\coth(\tfrac12 \beta t)\big)z}
        + \frac{2\beta \sqrt{X_0}}{\sigma^2 \sinh(\tfrac12 \beta t)} \sqrt{z}
        \bigg), \quad z\uparrow \infty.
  \end{multline*}
\end{corollary}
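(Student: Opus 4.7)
The plan is to derive this corollary as an immediate specialization of Theorem~\ref{thm:main2} via the identification~\eqref{eq:P I}. First I would substitute $\lambda = \beta t$, $b = 2\alpha/\sigma^2$, $x = 2\beta z/\sigma^2$, and $y = 2\beta X_0/\sigma^2$ into~\eqref{eq:P I}, so that the problem of estimating $\mathbb{P}[\max_{0\leq s\leq t}X_s\geq z]$ as $z\uparrow\infty$ becomes the problem of estimating $I(\lambda,b,x,y)$ as $x\uparrow\infty$ with $\lambda,b,y$ fixed. Since $\alpha,\beta,\sigma,X_0,t>0$, the parameters $\lambda,b,y>0$ are constant, which is exactly the regime covered by Theorem~\ref{thm:main2} (invoking Conjecture~\ref{conj}).

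Next I would plug the above substitutions into the asymptotic formula~\eqref{eq:main}. The power prefactor is
\[
   x^{b/2-3/4} = \Big(\tfrac{2\beta}{\sigma^2}\Big)^{\alpha/\sigma^2-3/4} z^{\alpha/\sigma^2-3/4},
\]
giving precisely the algebraic factor appearing in the statement. For the exponential, I would use the explicit expressions for $\phi(u_0)$ and $u_0$ recorded just after~\eqref{eq:phi u0}: from $\phi(u_0) = -\tfrac12(1+\coth(\tfrac12\lambda))$ we get
\[
   x\phi(u_0) = -\tfrac{\beta}{\sigma^2}\big(1+\coth(\tfrac12\beta t)\big)z,
\]
while $u_0 = (4\sinh^2(\tfrac12\lambda))^{-1}$ yields $\sqrt{u_0} = (2\sinh(\tfrac12\beta t))^{-1}$, so
\[
   2\sqrt{yu_0 x} = 2\cdot\tfrac{1}{2\sinh(\tfrac12\beta t)}\cdot\tfrac{2\beta}{\sigma^2}\sqrt{X_0\,z}
   = \tfrac{2\beta\sqrt{X_0}}{\sigma^2\sinh(\tfrac12\beta t)}\sqrt{z}.
\]
Both terms match the exponent in the statement.

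Finally, the multiplicative constant $C_2$, evaluated at $\lambda=\beta t$, $b=2\alpha/\sigma^2$, $y=2\beta X_0/\sigma^2$, is by definition $\hat{C}_2$; since $X_0$ is fixed, this is genuinely a constant independent of $z$. Collecting the prefactor, the constant, and the two exponential contributions reproduces the claimed asymptotic, finishing the proof. There is no real obstacle here—every nontrivial step was already handled inside Theorem~\ref{thm:main2}; the only task is the bookkeeping of the substitution and the simplification of $\phi(u_0)$ and $\sqrt{u_0}$ in terms of the hyperbolic functions of $\tfrac12\beta t$.
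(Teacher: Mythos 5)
Your proposal is correct and follows exactly the paper's route: the paper proves Corollary~\ref{cor:main1} simply by specializing Theorem~\ref{thm:main2} through the identification~\eqref{eq:P I}, just as you do. Your explicit bookkeeping of $x^{b/2-3/4}$, $x\phi(u_0)$ and $2\sqrt{yu_0x}$ in terms of $z$ and the hyperbolic functions of $\tfrac12\beta t$ is accurate and merely spells out what the paper leaves implicit.
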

Note that the cruder LDP bound~\eqref{eq:ldp bound} correctly
captures the dependence of the exponential factor
of the tail asymptotics on~$z$ and $\sigma$. As a consistency check, we compare our
results with the tail of the CIR marginal distribution. From the well-known explicit transition
density (see~(4) in~\cite{ChLi06}), we obtain, for fixed $t>0$,
\begin{align*}
  \mathbb{P}[X_t\geq z] &=e^{O(\log z)}\int_z^\infty
  \exp\bigg({-\frac{2\beta e^{\beta t}y}{\sigma^2(e^{\beta t}-1)}}\bigg)
  I_{2\alpha/\sigma^2-1}\bigg(
  \frac{4\beta \sqrt{X_0e^{\beta t}y}}{\sigma^2(e^{\beta t}-1)}
  \bigg)dy \\
  &= e^{O(\sqrt{z})}\int_z^\infty
  \exp\bigg({-\frac{2\beta e^{\beta t}y}{\sigma^2(e^{\beta t}-1)}}\bigg)dy\\
  &=\exp\bigg({-\frac{2\beta e^{\beta t}}{\sigma^2(e^{\beta t}-1)}}z+ O(\sqrt{z})\bigg) \\
  &=\exp\bigg(
      {-\frac{\beta}{\sigma^2}\big(1+\coth(\tfrac12 \beta t)\big)z}+ O(\sqrt{z})\bigg),
  \quad z\uparrow\infty.
\end{align*}
Therefore, logarithmic tail asymptotics of the marginal and the running maximum
agree. This is not surprising, because for paths having a very large running maximum $\max_{0\leq s\leq t}X_s,$ this maximum is typically realized
close to time~$t$, where the process has had the most time to deviate
from its initial value.

While there is a considerable literature on tail asymptotics
of diffusion hitting times (also known as first-passage times),  asymptotics with respect to the level
have received less attention. By~\eqref{eq:tail tau}, level asymptotics
are equivalent to tail asymptotics of the running maximum.
Some results related to ours are given in~\cite{Mc09,NoRiSa85}.
However, Assumption~(1.3) of~\cite{Mc09} is not satisfied for the CIR process.
In~\cite{NoRiSa85}, Corollary~1 is of interest for our work. It gives
level asymptotics for the density of the hitting time, for a rather general
diffusion that has an invariant distribution, which is the case for the CIR process.
By integrating this density approximation, we can formally get asymptotics
for the cumulative distribution function, which translates into running maximum
tail asymptotics by~\eqref{eq:tail tau}. 
However, the result of this heuristic argument does \emph{not} agree with our findings.
While this may simply be
a case where integration and asymptotics (with respect to a parameter) do not commute,
we note that several steps in~\cite{NoRiSa85} appear to be non-rigorous.
For instance, no argument is given for the interchange of limit and summation in the proof of
Corollary~1.

\section{Eigenfunction expansion}\label{se:eigen}

The integrand in~\eqref{eq:def I} has infinitely many poles, all of which
are simple and in $(-\infty,0]$ (see Proposition~\ref{prop:zeros} for a new proof of the latter two properties).
We denote them by
\[
  0 > -s_0 > -s_1 > \cdots.
\]
Using the residue theorem and some asymptotic properties of the Kummer function and its $a$-zeros,
it is not hard to show that
\begin{align}
  I(\lambda,b,x,y)&=
  1+ \sum_{k=0}^\infty \mathrm{res}_{s=-s_k} \frac{e^{\lambda s}}{s}\frac{M(s,b,y)}{M(s,b,x)}\notag \\
  &=1-\sum_{k=0}^\infty \frac{e^{-\lambda s_k}}{s_k}
    \frac{M(-s_k,b,y)}{M'(-s_k,b,x)}. \label{eq:I series}
\end{align}
Throughout this section, $M'$ denotes the derivative with respect to the first parameter.
By~\eqref{eq:tail tau} and~\eqref{eq:P I}, this gives an expansion of the cumulative
distribution function of the CIR hitting time, which is well known.
We refer to Propositions~1 and~2 in~\cite{Li04}, and to~\cite{Ke80} for a classical reference
on such expansions for general diffusions. Proposition~2 in~\cite{Li04} also
gives the asymptotic behavior of the expansion coefficients for large~$k$. In the spirit
of the above results, we analyse the coefficients as $x\uparrow\infty$.




{}From 13.2.39 in~\cite{DLMF,NI10} and the expansion for~$M$ in Theorem~1
of~\cite{Pa13}, it follows that
\begin{align}
  M(s,b,x)
  &=e^x M(b-s,b,-x) \notag \\
  &=\frac{e^x \Gamma(b)}{\Gamma(b-s)}\Big(
    \frac{x^{s-b}\Gamma(b-s)}{\Gamma(s)}
    + x^{-s}e^{-x} \cos \pi s + O\big(x^{s-b-1}/\Gamma(s)\big) \Big). \label{eq:as eq}
\end{align}
From this we easily see that $s_k=s_k(x)$ converges to~$k$
for $x\uparrow\infty$. (In contrast to that, for fixed~$x$ and large~$k$
 the behavior of $s_k\sim \pi^2 k^2/(4x)$ is quadratic, by 13.9.10 in~\cite{DLMF,NI10}.)
 The asymptotics of $s_k$  for large~$x$ can be found  by setting the leading term
of~\eqref{eq:as eq} to zero, namely
\[
x^{s-b} \Gamma(b-s) / \Gamma(s) +x^{-s}e^{-x} \cos \pi s = 0.
 \]
 Since 
\begin{align*}
   \frac{\Gamma(b-s)}{\Gamma(s)} &= \frac{(s)_{k+1}\Gamma(b-s)}{\Gamma(s+k+1)} \\
  &\sim \Gamma(b+k)(s+k)(-1)^k (-s)(-s-1)\dots(-s-k+1)\\
   &\sim \Gamma(b+k)(s+k)(-1)^k k!,
\end{align*}
we have
\[
  s+k \sim -\frac{x^{b+2k}e^{-x}}{k! \, \Gamma(b+k)}.
\]
We have proved:
\begin{lemma}
  For $k\in\mathbb{N}_0$, we have
  \begin{equation}\label{eq:asympt sk}
    s_k = k + \frac{x^{b+2k}e^{-x}}{k!\, \Gamma(b+k)}\big(1+o(1)\big), \quad x\uparrow \infty.
  \end{equation}
\end{lemma}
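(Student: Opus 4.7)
The plan is to formalize the heuristic already sketched in the paragraphs immediately preceding the lemma statement. The starting point is the asymptotic expansion \eqref{eq:as eq}, whose bracketed factor must vanish at $s=-s_k$, since the overall prefactor $e^x\Gamma(b)/\Gamma(b-s)$ is nonvanishing. Hence the defining equation for $s_k$ is, up to a negligible remainder,
\[
  x^{s-b}\,\frac{\Gamma(b-s)}{\Gamma(s)}+x^{-s}e^{-x}\cos\pi s = 0, \qquad s=-s_k.
\]

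First I would pin down the qualitative convergence $s_k(x)\to k$ as $x\uparrow\infty$ that is used in the text. The leading term $x^{s-b}\Gamma(b-s)/\Gamma(s)$ has simple zeros precisely at the poles of $1/\Gamma(s)$, i.e.\ at $s=-k$ for $k\in\mathbb{N}_0$; the competing term $x^{-s}e^{-x}\cos\pi s$ is exponentially small on any compact set of $s$. A standard Rouch\'e argument on a small disk around each $-k$ then shows that, for all sufficiently large $x$, $M(s,b,x)$ has exactly one zero in that disk, and that zero converges to $-k$. (Simplicity of $-s_k$ is known from Proposition~\ref{prop:zeros}, and also drops out of the Rouch\'e count.)

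Next, writing $s_k=k+\epsilon_k$ with $\epsilon_k\to 0$, I would substitute $s=-s_k$ into the vanishing condition and expand. Using the Laurent expansion $\Gamma(-k-\epsilon)=-(-1)^k/(k!\,\epsilon)+O(1)$ (equivalently, the identity $1/\Gamma(s)=(s)_{k+1}/\Gamma(s+k+1)$ together with the computation of $(s)_{k+1}$ carried out just above the lemma), and the limits $\Gamma(b+s_k)\to\Gamma(b+k)$ and $\cos\pi s_k=(-1)^k\bigl(1+O(\epsilon_k^2)\bigr)$, the balance becomes
\[
  -(-1)^k\,k!\,\Gamma(b+k)\,\epsilon_k\,x^{-k-b}\bigl(1+o(1)\bigr)+(-1)^k x^k e^{-x}\bigl(1+o(1)\bigr)=0,
\]
which, solved for $\epsilon_k$, yields the asserted formula.

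The main obstacle is showing that the error term $O(x^{s-b-1}/\Gamma(s))$ in \eqref{eq:as eq}, together with the regular parts of the Laurent expansion of $\Gamma$ at $-k$ and with $\cos\pi s_k-(-1)^k$, can all be absorbed into the final $1+o(1)$ factor. Once $\epsilon_k$ is isolated, one sees $\epsilon_k=x^{b+2k}e^{-x}\bigl(1+o(1)\bigr)/(k!\,\Gamma(b+k))$ is exponentially small, so every neglected contribution is much smaller than the leading balance; making this rigorous amounts to applying the implicit function theorem (or a short contraction-mapping argument) in the variable $\epsilon_k$ on a neighborhood shrinking at rate $x^{-K}$ for any fixed $K$, which easily accommodates the required remainder estimates.
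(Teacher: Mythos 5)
Your proposal takes essentially the same route as the paper: the paper's own proof is precisely the balance you describe, namely setting the two dominant terms of \eqref{eq:as eq} to zero at $s=-s_k$ and linearizing $1/\Gamma(s)$ at $s=-k$ via the Pochhammer identity, and your signs and the resulting expression for $\epsilon_k=s_k-k$ agree with \eqref{eq:asympt sk}. The Rouch\'e/implicit-function details you add merely supply the rigor the paper leaves implicit (``we easily see that $s_k$ converges to $k$''), so this is a correct, slightly more careful rendering of the same argument.
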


\begin{definition}
  The harmonic numbers of order~$\nu$ are defined by
  \[
    H^{(\nu)}_n := \sum_{k=1}^n \frac{1}{k^{\nu}},
  \]
  and $H_n:=H^{(1)}_n$.
\end{definition}

\begin{theorem}
  Fix $k\geq0$.
  For $x\uparrow \infty$, we have
   \begin{align*}
    M'(-s_k,b,x) &= (-1)^k k! \sum_{r=k+1}^\infty \frac{x^r(r-k-1)!}{(b)_r\, r!} +O(x^k) \\
    &= \frac{(-1)^k  x^{k+1}}{(b)_{k+1}(k+1)}
    \, {}_2F_2\Big( \genfrac{}{}{0pt}{}{1,1}{b+k+1,k+2} \Big | x \Big)  +O(x^k).
  \end{align*}
  For $k=0$, we have, more precisely:
  \[
    M'(-s_0,b,x) = \frac{x}{b}\, {}_2F_2\Big( \genfrac{}{}{0pt}{}{1,1}{b+1,2} \Big | x \Big)
    -2s_0 \sum_{r=1}^\infty \frac{H_{r-1}\, x^r}{(b)_r\, r} + O(x^b e^{-x}).   
  \]
\end{theorem}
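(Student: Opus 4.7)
The approach is Taylor expansion of $M(a,b,x)$ in its first argument around $a=-k$, combined with the previous lemma which says $s_k-k$ is exponentially small in~$x$. Since $(a)_n=\prod_{j=0}^{n-1}(a+j)$, for $n\ge k+1$ the factor $(a+k)$ vanishes at $a=-k$, so $(a)_n$ has a simple zero there and
\[
   \left.\frac{d(a)_n}{da}\right|_{a=-k}
   =\prod_{\substack{0\le j\le n-1\\ j\ne k}}(-k+j)
   =(-1)^k k!\,(n-k-1)!,
\]
while for $n\le k$ both $(a)_n$ and $(a)_n'$ are bounded in~$n$ and contribute a polynomial $P_k(x)$ of degree at most~$k$. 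Substituting in the series for $M'$ gives
\[
   M'(-k,b,x)=(-1)^k k!\sum_{n=k+1}^\infty \frac{(n-k-1)!}{(b)_n\,n!}x^n + P_k(x),
\]
which after the index shift $m=n-k-1$ and the identities $(b)_n=(b)_{k+1}(b+k+1)_m$, $n!=(k+1)!(k+2)_m$, $m!=(1)_m$ becomes the ${}_2F_2$ expression of the theorem.

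Next I would Taylor expand
\[
   M'(-s_k,b,x)=M'(-k,b,x)-(s_k-k)M''(-k,b,x)+\tfrac12 (s_k-k)^2 M'''(-\xi,b,x)
\]
for some $\xi$ between $k$ and $s_k$. The previous lemma furnishes $s_k-k=O\!\bigl(x^{b+2k}e^{-x}\bigr)$, so the task is to estimate $M^{(j)}(-k,b,x)$ for $j\ge 2$. Writing $(a)_n=(a+k)g_n(a)$ with $g_n$ analytic at $-k$ and using $g_n'(-k)=g_n(-k)\bigl(H_{n-k-1}-H_k\bigr)$, one obtains
\[
   \left.\frac{d^2(a)_n}{da^2}\right|_{a=-k}=2(-1)^k k!\,(n-k-1)!\,(H_{n-k-1}-H_k),\qquad n\ge k+1,
\]
plus a polynomial piece. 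Standard saddle-point or Watson-type asymptotics for the series $\sum x^n/[(b)_n n]$ and its harmonic-weighted variants show $M^{(j)}(-k,b,x)=O\!\bigl(x^{-b-k}e^x\bigr)$ up to logarithmic factors. Combined with the estimate for $s_k-k$, this yields $(s_k-k)M''(-k,b,x)=O(x^k)$ (absorbing a logarithmic factor), which produces the claimed $O(x^k)$ remainder.

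For the refined case $k=0$ the polynomial part $P_0$ is empty. I would retain the first-order term explicitly: since $H_{r-1}-H_0=H_{r-1}$,
\[
   M''(0,b,x)=2\sum_{r=1}^\infty\frac{H_{r-1}}{(b)_r\,r}x^r,
\]
which gives the $-2s_0\sum\tfrac{H_{r-1}x^r}{(b)_r r}$ correction. The quadratic remainder satisfies $(s_0)^2 M'''(0,b,x)=O\!\bigl((x^b e^{-x})^2\cdot x^{-b}e^x\bigr)=O(x^b e^{-x})$, matching the stated error.

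The main obstacle is the asymptotic estimation of $M^{(j)}(-k,b,x)$ for $j\ge 2$: these are confluent-hypergeometric-type series with harmonic weights, and one must argue that their large-$x$ growth is $x^{-b-k}e^x$ up to slowly varying factors, so that multiplying by the tiny quantity $(s_k-k)^j$ produces an error that is polynomial in~$x$ of the order claimed. Everything else is bookkeeping.
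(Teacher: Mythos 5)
Your proposal is correct in substance and follows essentially the same route as the paper: perturb around $a=-k$ using the preceding lemma's estimate $s_k-k=O(x^{b+2k}e^{-x})$, compute the value at $a=-k$ termwise (your Pochhammer-derivative formula $\frac{d}{da}(a)_n\big|_{a=-k}=(-1)^k k!\,(n-k-1)!$ and the index-shift to the ${}_2F_2$ agree with the paper), and control everything else by the exponential asymptotics of ${}_2F_2$ (DLMF 16.11.7). The difference is organizational. The paper works with the exact series $M'(s,b,x)=\sum_{r\ge1}\frac{(s)_r x^r}{(b)_r\, r!}\sum_{m=1}^r\frac{1}{m-1+s}$ evaluated at $s=-s_k$ and expands $(s)_k$, $(s+k+1)_{r-k-1}$ and the harmonic sum in powers of $s+k$ uniformly in $r$; the pole $1/(s+k)$ cancels against the zero of $(s)_r$, the first-order term comes out with the harmonic-number weight $H_k-H_{r-k-1}$ (for $k=0$ this is exactly the $-2s_0\sum_r H_{r-1}x^r/((b)_r\,r)$ correction), and no separate a priori bounds on higher parameter-derivatives of $M$ are needed. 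Your Taylor-with-Lagrange-remainder packaging is equivalent in outcome, but it does require growth estimates $M^{(j)}(a,b,x)=O\big(x^{-k-b}e^x\cdot\text{polylog}\big)$, $j=2,3$, uniformly for $a$ between $-s_k$ and $-k$ (not just at $-k$); you assert these as standard but do not prove them, and this is the one real missing step — though it is routine to close (bound $H_{n-k-1}$ by $\log n$ or by $n$, and the order-$\nu\ge2$ harmonic numbers by constants, then compare with a ${}_1F_1$ or ${}_2F_2$ whose exponential growth is known), and it is no less detailed than the paper's own one-line final estimate via $H_k-H_{r-k-1}=O(r)$. One caution: "absorbing a logarithmic factor" into $O(x^k)$ is not legitimate as written, since the harmonic weight genuinely costs a factor $\log x$; the paper's crude bound is no sharper on this point, and for $k=0$ that logarithmic contribution is precisely what the refined statement retains explicitly, so you should either keep the $\log$ in the error term or keep the harmonic-weighted term explicit as you do in the $k=0$ case.
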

\begin{proof}
  By~\eqref{eq:M series},
  \begin{equation}\label{eq:M'}
    M'(s,b,x) = \sum_{r=1}^\infty \frac{(s)_r\, x^r}{(b)_r\, r!}(\psi(s+r)-\psi(r))
    = \sum_{r=1}^\infty \frac{(s)_r\, x^r}{(b)_r\, r!} \sum_{m=1}^r\frac{1}{m-1+s},
  \end{equation}
  where~$\psi=\Gamma'/\Gamma$ denotes the digamma function.
  This formula, as well as many others concerning the derivatives of~$M$ with respect to its parameters,
  also appears in~\cite{AnGa08}.  
  If we put $s=-s_k$ in~\eqref{eq:M'}, then
  the sum $\sum_{r=1}^k$ is zero for $k=0$ and $O(x^k)$ for $k\geq1$, and can thus
  be ignored in the following. For $r>k$, we write the Pochhammer symbol as
  \begin{equation}\label{eq:poch split}
    (s)_r =  (s)_k\, (s+k) (s+k+1)_{r-k-1}.
  \end{equation}
  We may assume $|s+k|<1$, as we intend to put $s=-s_k\to -k.$ Then,
  the last factor is
  \begin{align*}
    (s+k+1)_{r-k-1} &= \prod_{j=0}^{r-k-2}(j+1)\Big(1+\frac{s+k}{j+1}\Big) \\
    &= (r-k-1)!\, \exp\Big( \sum_{j=0}^{r-k-2} \log\Big(1+\frac{s+k}{j+1}\Big) \Big)\\
    &= (r-k-1)!\, \exp\Big({-\sum_{j=0}^{r-k-2}} \sum_{\nu=1}^\infty
      \frac{1}{\nu}\Big(\frac{-s-k}{j+1}\Big)^\nu \Big).
  \end{align*}
  Since
  \begin{align*}
    \sum_{j=0}^{r-k-2} \sum_{\nu=2}^\infty \frac{1}{\nu}\Big(\frac{s+k}{j+1}\Big)^\nu 
    = \sum_{\nu=2}^\infty \frac{H^{(\nu)}_{r-k-1}}{\nu}(s+k)^\nu
    = O((s+k)^2)
  \end{align*}
  as $s+k\to0$ (recall that $k$ is fixed throughout), uniformly with respect to~$r$, we obtain
  \begin{align*}
    (s+k+1)_{r-k-1} &= (r-k-1)!\ \exp\Big({-\sum_{j=0}^{r-k-2}}
      \frac{s+k}{j+1} + O((s+k)^2) \Big) \\
    &=(r-k-1)!\, \big(1+H_{r-k-1}(s+k) + O((s+k)^2) \big).
   \end{align*}
   We proceed with the first factor in~\eqref{eq:poch split}:
   \begin{align*}
     (s)_k &= (-1)^k k!\, \exp\Big( \sum_{j=0}^{k-1} \log\Big(
        1+ \frac{-s-k}{k-j}\Big) \Big) \\
     &= (-1)^k k!\, \exp\Big({- \sum_{j=0}^{k-1}} \sum_{\nu=1}^\infty
       \frac{1}{\nu}\Big( \frac{s+k}{k-j} \Big)^\nu \Big) \\
     &= (-1)^k k!\, \exp\Big({- \sum_{j=0}^{k-1}} \frac{-s-k}{k-j} + O((s+k)^2) \Big) \\
     &=(-1)^k k!\, \big(1-H_k(s+k) + O((s+k)^2)  \big).
   \end{align*}
   It is easy to see that the last sum in~\eqref{eq:M'} satisfies
   \begin{equation*}
     \sum_{m=1}^r\frac{1}{m-1+s} = \frac{1}{s+k} + H_{r-k-1} - H_k + O((s+k)),
   \end{equation*}
   as $s\to-k$, uniformly with respect to $r>k$. Using this and the estimate we  found
   for $(s)_r$  yields
   \begin{align}
     &\sum_{r=k+1}^\infty \frac{(s)_r\, x^r}{(b)_r\, r!}  \sum_{m=1}^r\frac{1}{m-1+s} \notag \\
      &= (-1)^k k! \sum_{r=k+1}^\infty \frac{x^r(r-k-1)!}{(b)_r\, r!}
      \Big( 1-2(s+k)(H_k-H_{r-k-1} ) + O((s+k)^2) \Big). \notag
   \end{align}
   Since
   \begin{align*}
     \sum_{r=k+1}^\infty \frac{x^r(r-k-1)!}{(b)_r\, r!} &= 
     \frac{x^{k+1}}{(b)_{k+1}(k+1)!}\, {}_2F_2\Big(\genfrac{}{}{0pt}{}{1,1}{b+k+1,k+2}\Big|x \Big) \\
     &\sim \frac{x^{k+1}}{(b)_{k+1}(k+1)!}
       \frac{\Gamma(b+k+1)(k+1)!\, e^x}{x^{2k+b+1}} \\
     &= O(x^{-k-b}e^x), \quad x\uparrow \infty,
   \end{align*}
   where we have used 16.11.7  in \cite{DLMF,NI10},
   it follows that
   \begin{align}
     \sum_{r=k+1}^\infty \frac{(s)_r\, x^r}{(b)_r\, r!} & \sum_{m=1}^r\frac{1}{m-1+s} 
     =    \frac{(-1)^k x^{k+1}}{(b)_{k+1}(k+1)}\,
        {}_2F_2\Big(\genfrac{}{}{0pt}{}{1,1}{b+k+1,k+2}\Big|x \Big) \label{eq:sum k+1}  \\
     &-2  (-1)^k k!(s+k) \sum_{r=k+1}^\infty \frac{x^r(r-k-1)!}{(b)_r\, r!}(H_k-H_{r-k-1}) \notag \\
     & + O\big( (s+k)^2  x^{-k-b}e^x \big). \label{eq:sum k+1 2} \notag
   \end{align}
   As mentioned at the beginning of the proof, it suffices to estimate the sum in~\eqref{eq:sum k+1},
   with $s=-s_k$.
   For $k=0$, the result now follows from~\eqref{eq:asympt sk}. For $k\geq 1$, the claim follows from~\eqref{eq:asympt sk},  the fact that $H_k-H_{r-k-1}=O(r)$
   and the expansion of~${}_2F_2$ (see 16.11.7  in \cite{DLMF,NI10}).
\end{proof}

\begin{corollary}
  The asymptotic behavior of the summands in~\eqref{eq:I series} for $x\uparrow\infty$ is
  \begin{equation}\label{eq:as summand}
    \frac{e^{-\lambda s_k}}{s_k}
    \frac{M(-s_k,b,y)}{M'(-s_k,b,x)} =
    \begin{cases}
       1+o(1), & k = 0, \\
       \frac{(-1)^k M(-k,b,y)}{e^{\lambda k}k\cdot k!\, \Gamma(b)}
         x^{k+b}e^{-x}
         +o(x^{k+b}e^{-x}),  & k\geq1.
    \end{cases}
  \end{equation}
\end{corollary}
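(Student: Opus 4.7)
The plan is to substitute the three main asymptotic ingredients already at our disposal into the ratio, and verify that the stated expression emerges. The ingredients are: the asymptotic $s_k = k + x^{b+2k}e^{-x}/(k!\,\Gamma(b+k))\,(1+o(1))$ from~\eqref{eq:asympt sk}; continuity of $M(\cdot,b,y)$, which yields $M(-s_k,b,y)\to M(-k,b,y)$ since $s_k\to k$; and the asymptotic expression for $M'(-s_k,b,x)$ from the preceding theorem, combined with the large-$x$ behavior of the ${}_2F_2$ appearing there, taken from 16.11.7 in~\cite{DLMF,NI10}. I will treat $k\ge 1$ first and $k=0$ separately.

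For $k\ge 1$, the leading factor in the theorem is $\frac{(-1)^k x^{k+1}}{(b)_{k+1}(k+1)}\,{}_2F_2\!\left(\genfrac{}{}{0pt}{}{1,1}{b+k+1,k+2}\bigm|x\right)$, which by 16.11.7 of~\cite{DLMF,NI10} is asymptotic to $\frac{(-1)^k x^{k+1}}{(b)_{k+1}(k+1)}\cdot\Gamma(b+k+1)(k+1)!\,e^x x^{-b-2k-1}$. Using the identity $\Gamma(b+k+1)/(b)_{k+1}=\Gamma(b)$ and simplifying, this reduces to $(-1)^k k!\,\Gamma(b)\,e^x x^{-b-k}$. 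The $O(x^k)$ correction in the theorem is then of relative order $O(x^{2k+b}e^{-x})=o(1)$, so $M'(-s_k,b,x)\sim(-1)^k k!\,\Gamma(b)\,e^x x^{-b-k}$. Inverting and multiplying by $e^{-\lambda s_k}/s_k\to e^{-\lambda k}/k$ and by $M(-s_k,b,y)\to M(-k,b,y)$ yields exactly the claimed leading term, with multiplicative error $o(1)$, i.e.\ additive error $o(x^{k+b}e^{-x})$.

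For $k=0$, the leading factor in the finer expansion of the theorem, $\frac{x}{b}\,{}_2F_2\!\left(\genfrac{}{}{0pt}{}{1,1}{b+1,2}\bigm|x\right)$, is asymptotic (again by 16.11.7) to $\Gamma(b)\,e^x x^{-b}$. From~\eqref{eq:asympt sk} we have $s_0 \sim x^b e^{-x}/\Gamma(b)$, which is exponentially small, so the middle term $-2s_0 \sum_{r\ge 1} H_{r-1}x^r/((b)_r r)$ is $O(s_0\cdot e^x\log x\,x^{-b})=O(\log x)$, negligible next to $\Gamma(b)\,e^x x^{-b}$; the residual $O(x^b e^{-x})$ is obviously negligible as well. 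Thus $M'(-s_0,b,x)\sim\Gamma(b)\,e^x x^{-b}$, and since $e^{-\lambda s_0}\to 1$ and $M(-s_0,b,y)\to 1$, the ratio becomes $(1+o(1))\cdot x^b e^{-x}/(s_0\Gamma(b))$, which tends to $1$ because of the sharp asymptotic for $s_0$.

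The main obstacle, though modest, is the bookkeeping for the error terms: showing that the $O(x^k)$ (resp.\ $O(x^be^{-x})$) correction from the theorem stays negligible after inversion and multiplication, and that the exponentially small deviation $s_k-k$ does not disturb $M(-s_k,b,y)$ or $1/s_k$ beyond the stated $o(x^{k+b}e^{-x})$ tolerance. Both checks reduce to comparing exponential-in-$x$ scales against powers of~$x$, which is straightforward given the explicit rates already in hand.
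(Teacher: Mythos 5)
Your proposal is correct and is essentially the derivation the paper intends: the corollary follows immediately by combining the $s_k$ asymptotics~\eqref{eq:asympt sk}, continuity of $M(\cdot,b,y)$, and the theorem's expression for $M'(-s_k,b,x)$ together with the large-$x$ behaviour of the ${}_2F_2$ from 16.11.7 (which, after the simplification $\Gamma(b+k+1)/(b)_{k+1}=\Gamma(b)$, gives $M'(-s_k,b,x)\sim(-1)^k k!\,\Gamma(b)e^x x^{-k-b}$, and for $k=0$ the cancellation $x^be^{-x}/(s_0\Gamma(b))\to1$). Your error bookkeeping (relative error $O(x^{2k+b}e^{-x})$ from the $O(x^k)$ term, and $O(\log x)$ from the $H_{r-1}$ sum when $k=0$) is accurate, so no gap remains.
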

By~\eqref{eq:as summand}, the summand $k=0$ almost cancels with~$1$ on the right-hand side of~\eqref{eq:I series}.
With a bit of extra work, it can be shown that the net contribution
of these two summands satisfies
\[
1- \frac{e^{-\lambda s_0}}{s_0}
    \frac{M(-s_0,b,y)}{M'(-s_0,b,x)} \sim s_0\Big( \lambda +\frac{y}{b} \,
     {}_2F_2\Big(\genfrac{}{}{0pt}{}{1,1}{b+1,2}\Big|y \Big)\Big),\quad x\uparrow \infty.
\]

\appendix

\section{Asymptotics of $M(a,b,x)$ for $a\approx x$}\label{se:asympt}

We use the saddle point method to analyse the Kummer function $M(a,b,x)$ 
for $x\uparrow\infty$, with~$b$ fixed and~$a$ of the same growth
order as~$x$. It is important to note that this result is not new,
as it can be obtained from the expansion (27.4.64) in~\cite{Te15}
by putting $a=ux,c=b$ and $z=1/u$. There, a different method was used,
and $z$ is assumed to be real and positive, but the latter constraint
can be easily relaxed to $\mathrm{Re}(z)>0,$ by inspection
of the proof in~\cite{Te15}.
 \begin{theorem}\label{thm:kummer expans}
   Let $b\in\mathbb{C}\setminus\{0,-1,\dots\}$ and $\mathrm{Re}(u)>0.$
   Then
   \[
     M(ux,b,x) \sim \frac{\Gamma(b)}{\sqrt{2\pi}(1+4u)^{1/4}}
     \Big(\frac{\sqrt{1+4u}-1}{2}\Big)^b
     (ux)^{1/2-b} e^{x \psi(t_0)}
   \]
   as $x\uparrow\infty$, where
   \begin{equation}\label{eq:psi}
     \psi(t_0)= \frac{1+\sqrt{1+4u}}{2}
     +u \log \Big( \frac{\sqrt{1+4u}+1}{\sqrt{1+4u}-1} \Big).
   \end{equation}
   This holds uniformly with respect to~$u$ if~$u$ is bounded and bounded
   away from zero, and $|\arg u|\leq\tfrac12 \pi-\varepsilon$
   for some $\varepsilon>0$.
 \end{theorem}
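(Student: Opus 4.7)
The plan is to apply the saddle point method to an integral representation of $M(ux,b,x)$ in which $x$ enters the integrand as a large parameter. A convenient starting point is the Euler integral
\[
M(a,b,x)=\frac{\Gamma(b)}{\Gamma(a)\Gamma(b-a)}\int_0^1 e^{xs}s^{a-1}(1-s)^{b-a-1}\,ds,
\]
valid for $0<\mathrm{Re}(a)<\mathrm{Re}(b)$. For $a=ux$ this range condition fails, so one first replaces the segment $[0,1]$ with a Pochhammer double-loop contour around $0$ and $1$; this coincides with the Euler integral in the classical range and provides the analytic continuation in $a$ required here. After a change of variable placing the dominant saddle on a manageable contour, the integrand takes the form $h(t)\exp\bigl(x\psi(t)\bigr)$ with $\psi(t)=t+u\log[t/(t-1)]$ and $h(t)$ an explicit algebraic factor carrying the $b$-dependence.

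Next I would locate the saddles: from $\psi'(t)=1-u/(t(t-1))=0$ one gets $t_{0,1}=(1\pm\sqrt{1+4u})/2$, of which only $t_0$ is dominant in the sector $|\arg u|\le\tfrac12\pi-\varepsilon$. The identities $t_0(t_0-1)=u$ and $2t_0-1=\sqrt{1+4u}$ give
\[
\psi''(t_0)=\frac{u(2t_0-1)}{\bigl(t_0(t_0-1)\bigr)^2}=\frac{\sqrt{1+4u}}{u},
\]
so after deforming the loop through $t_0$ along the steepest descent direction, the standard Laplace formula yields
\[
\int h(t)\,e^{x\psi(t)}\,dt \sim h(t_0)\sqrt{\frac{2\pi u}{x\sqrt{1+4u}}}\,e^{x\psi(t_0)}.
\]
Combining this with Stirling's expansion $\Gamma(ux)\sim\sqrt{2\pi/(ux)}\,(ux)^{ux}e^{-ux}$ applied to the prefactor $\Gamma(b)/[\Gamma(ux)\Gamma(b-ux)]$ (the reflection formula is needed to cope with $\mathrm{Re}(b-ux)\to-\infty$), the $(ux)^{ux}e^{-ux}$ piece of Stirling cancels against the corresponding piece of $e^{x\psi(t_0)}$. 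The remaining algebraic factors reorganize, using $u=t_0(t_0-1)$ to turn $t_0$-powers into $(t_0-1)^b=((\sqrt{1+4u}-1)/2)^b$, to give precisely the advertised $(ux)^{1/2-b}$, $(1+4u)^{-1/4}$, $\Gamma(b)/\sqrt{2\pi}$ and residual exponential $\exp\bigl(x\psi(t_0)\bigr)$ with $\psi(t_0)$ as in~\eqref{eq:psi}.

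The main obstacle is the bookkeeping for branches on the deformed contour: because the saddle $t_0$ lies beyond $t=1$, the multi-valued factors $s^{a-1}$, $(1-s)^{b-a-1}$ and the logarithm in $\psi$ all have to be tracked consistently as the Pochhammer loop is pushed through $t_0$. A closely related technical point is uniformity in $u$: one must verify that the steepest descent direction at $t_0(u)$ depends continuously on $u$, that no other saddle or branch point intervenes, and that the tail estimates are uniform throughout the sector $|\arg u|\le\tfrac12\pi-\varepsilon$, since this uniformity is what makes the result applicable in the proof of Theorem~\ref{thm:main2}.
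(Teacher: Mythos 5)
Your proposal is correct in substance and follows essentially the same route as the paper: the paper starts from Temme's single-loop representation $M(a,b,x)=\frac{\Gamma(1+a-b)\Gamma(b)}{\Gamma(a)}\frac{1}{2\pi i}\int_0^{(1+)}e^{xt}\,t^{a-1}(t-1)^{b-a-1}\,dt$ (valid here since $\mathrm{Re}(ux)>0$, so no Pochhammer double loop or reflection-formula bookkeeping is needed), obtains exactly your phase $\psi(t)=t+u\log\frac{t}{t-1}$ with saddle $t_0=\frac{1+\sqrt{1+4u}}{2}$ and $\psi''(t_0)=\sqrt{1+4u}/u$, applies Olver's Theorem 4.7.1 to get the saddle-point estimate uniformly in $u$, and finishes with $\Gamma(1+a-b)\Gamma(b)/\Gamma(a)\sim\Gamma(b)(ux)^{1-b}$. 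One slip in your narrative: no part of $e^{x\psi(t_0)}$ cancels against Stirling factors (it contains no $(ux)^{ux}e^{-ux}$ piece, since $x\psi(t_0)$ is only of the form $x$ times a bounded function of $u$); that cancellation occurs between $1/\Gamma(ux)$ and the $\Gamma(1+ux-b)$ arising from $1/\Gamma(b-ux)$ via reflection, after which the exponential survives intact exactly as in the stated asymptotics.
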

\begin{proof}
  By (10.1.6) in~\cite{Te15}, we have
  \begin{equation}\label{eq:Te repr}
    M(a,b,x) = \frac{\Gamma(1+a-b)\Gamma(b)}{\Gamma(a)}
    \frac{1}{2\pi i}\int_0^{(1+)} e^{xt}
    t^{a-1}(t-1)^{b-a-1}dt,
  \end{equation}
  where $a=ux$.
  The integration path starts and ends at zero
  and goes around $t=1$ counterclockwise. Defining
  \begin{equation}\label{eq:def psi}
    \psi(t) := t+ u \log \frac{t}{t-1} 
    \quad \text{and} \quad
    f(t) := (t-1)^{b-1}/t,
  \end{equation}
  we can write the integral as
  \[
     \frac{1}{2\pi i}\int_0^{(1+)} e^{x\psi(t)}f(t)dt.
  \]
  Equating the first derivative
  $    \psi'(t) = 1-\frac{u}{t(t-1)}$
  to zero, we find a saddle point at
  \begin{equation}\label{eq:def t0}
    t_0=t_0(u) := \frac{1+\sqrt{1+4u}}{2}.
  \end{equation}
  The second derivative of~$\psi$ at the saddle point is
  \[
    \psi''(t_0) = \frac{u(2t-1)}{t^2(t-1)^2}\bigg|_{t=t_0}
    =\frac{\sqrt{1+4u}}{u}  =: | \psi''(t_0)| e^{i\theta}. 
  \]
  The integration contour is deformed in order to pass through~$t_0$.
  If $u\in(0,\infty),$ then $t_0>1$ is real, and the contour
  is vertical at~$t_0$. For general~$u$, we let the contour be such that
  $\arg(t-t_0)=\tfrac12\pi-\tfrac12\theta$ holds for $|t-t_0|$ small
  after~$t$ traverses the saddle point. Now we can apply
  Theorem 4.7.1 in~\cite{Ol74}. It is straightforward to see
  that the contour can be chosen such that the inequality
  before that theorem is satisfied.
   Its other assumptions are clearly
  satisfied as well, and we obtain
  \begin{equation}\label{eq:int olver}
     \frac{1}{2\pi i}\int_0^{(1+)} e^{x\psi(t)}f(t)dt
     \sim 
    (1+4u)^{-1/4}
     \Big(\frac{\sqrt{1+4u}-1}{2}\Big)^b
     (2\pi ux)^{-1/2} e^{x \psi(t_0)}.
  \end{equation}
  By inspecting the proof of  Theorem 4.7.1 in~\cite{Ol74},
  uniformity with respect to~$u$ is easy to verify. From Stirling's formula,
  we have
  \[
    \frac{\Gamma(1+a-b)\Gamma(b)}{\Gamma(a)} \sim \Gamma(b)a^{1-b}.
  \]
  Combination of this with~\eqref{eq:Te repr}
  and~\eqref{eq:int olver} yields the assertion.
\end{proof}

\section{Monotonicity of $|M(a,b,x)|$ with respect to~$\mathrm{Im}(a)$}\label{se:mon}

Let
\[
  f(t,x):=|M(a+it,b,x)|^2=M(a+it,b,x)M(a-it,b,x),
\]
which is an entire function of $t$ and $x$
with  a power series expansion
\begin{equation*}
f(t,x)=\sum_{m\geq0}\sum_{n\geq0}v_{mn}\frac{t^m}{m!}\frac{x^n}{n!},
\end{equation*}
where $v_{mn}=f^{(m,n)}(0,0)$. 
Using the power series of $M$ and the Cauchy product we obtain
\begin{equation*}
f(t,x)=\sum_{n\geq0}
\left(\sum_{k=0}^n\binom{n}{k}\frac{(a+it)_k}{(b)_k}\frac{(a-it)_{n-k}}{(b)_{n-k}}\right)\frac{x^n}{n!}.
\end{equation*}
Since $f(t,x)$ is an even function of $t$,
 $v_{mn}=0$ when $m>2\lfloor n/2\rfloor$ or $m\equiv1\mod2$. 

\begin{theorem}\label{thm:mon}
Suppose $a\geq b>0$. Then $v_{mn}\geq0$ for all $m\geq0$, $n\geq0$.
\end{theorem}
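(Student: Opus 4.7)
My plan is to prove positivity of $v_{mn}$ by combining an explicit combinatorial expansion with a holonomic recurrence computed via computer algebra. Since $f(t,x)$ is even in $t$, only $m=2j$ needs handling, so the task reduces to showing that for each $n\geq 0$ the polynomial
$$R_n(t) := n!\,[x^n]f(t,x) = \sum_{k=0}^n \binom{n}{k} \frac{(a+it)_k (a-it)_{n-k}}{(b)_k (b)_{n-k}}$$
has non-negative coefficients (as an even polynomial in $t$). Expanding each Pochhammer symbol via the elementary symmetric polynomials of the shifted sequences $(a,a{+}1,\ldots,a{+}k{-}1)$ gives an explicit, if combinatorially dense, expression for $v_{2j,n}$ as a multiple sum involving $a,b,j,k,n$. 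The base cases $n=0,1,2$ can be verified directly from this formula; they in fact hold without the hypothesis $a\geq b$, so the restriction must be activated only at higher $n$.

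For the general case I would derive a linear recurrence for $v_{mn}$. Since $M(a+it,b,x)$ and $M(a-it,b,x)$ each satisfy Kummer's ODE in $x$, their product $f(t,x)$ is annihilated by the symmetric square of that operator, a fourth-order ODE in $x$ with coefficients polynomial in $t,a,b$. Translated to Maclaurin coefficients this becomes a linear recursion
$$\sum_{i,j} p_{ij}(a,b,m,n)\, v_{m+i,\,n+j} = 0,$$
computable by standard holonomic closure tools (Koutschan's \texttt{HolonomicFunctions}, or creative telescoping). Solving for the highest-index unknown expresses $v_{m,n+1}$ as a rational combination of previous $v_{m',n'}$, setting the stage for a strong induction on $n$ with an appropriate secondary order in $m$.

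The main obstacle will be that the raw recurrence produced this way almost never has uniformly non-negative coefficients: some cancellation is essentially always required. Overcoming it will probably require either finding a clever linear combination of contiguous relations that manifestly preserves positivity, or strengthening the induction hypothesis with auxiliary inequalities among the $v_{mn}$ (for instance, monotonicity in $n$ or sharper comparisons between neighbouring entries) so that the enriched hypothesis propagates through the recurrence. The condition $a\geq b$ should enter precisely here, as the sharp algebraic condition that flips the sign of one key coefficient. Since verifying the resulting polynomial sign conditions in the four parameters $(a,b,m,n)$ by hand is combinatorially hopeless, this is exactly where computer algebra — symbolic simplification of the recurrence, plus cylindrical algebraic decomposition or explicit sum-of-squares certificates to check the inequalities — becomes indispensable.
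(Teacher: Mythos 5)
Your strategy is in fact the one the paper uses: compute the fourth-order $x$-ODE satisfied by the product $f(t,x)=M(a+it,b,x)M(a-it,b,x)$ by holonomic closure, convert it into a linear recursion for the coefficients $v_{m,n}$, observe that the raw recursion is not manifestly positivity-preserving, and repair this by strengthening the induction hypothesis with monotonicity-type inequalities among neighbouring $v_{m,n}$, with $a\geq b$ entering as the algebraic condition that makes the key coefficients non-negative and computer algebra certifying the resulting polynomial inequalities. So the route is right. But as written the proposal stops exactly where the proof begins: you say the argument will ``probably require'' auxiliary inequalities without identifying which ones, and everything hinges on that choice. The paper's answer is to pass to \emph{third-order} finite differences $v'''_{m,n}$ in $n$; only at that depth does the rearranged recursion acquire coefficients $G_{j,n}$ that are all non-negative for $a\geq b>0$ and $n\geq 2$. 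First differences alone do not suffice, and discovering the correct order of differencing is the substantive content of the proof, not a routine afterthought.

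Two further points your plan does not anticipate. First, the strengthened hypothesis is \emph{false} at one spot: $v'''_{0,1}=2a/b-3$ can be negative, so the induction statement has to be patched (the paper replaces this single term by $0$ and checks the base cases $n\leq 2$ by hand). A naive ``all differences are non-negative'' induction would therefore fail at the very first step. Second, the term $-4t^2xf$ in the ODE couples level $m$ to level $m-2$, producing an inhomogeneous term $u'_{m,n}$ in the difference recursion; showing $u'_{m,n}\geq0$ requires the first-difference positivity at level $m-2$, which forces the nested (outer in $m$, inner in $n$) induction structure rather than a single strong induction on $n$. Your proposal would need to supply both of these ingredients before it constitutes a proof.
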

We give the proof of the theorem after some lemmas at the end of this section.
It immediately implies the following corollary, which is the main result of
the section.
\begin{corollary}\label{cor:mon}
Suppose $a\geq b>0$ and $x>0$. Then 
\[
  t\in\mathbb R_+\mapsto|M(a+it,b,x)|
\]
 is increasing.
\end{corollary}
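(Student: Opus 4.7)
The plan is to deduce the corollary directly from Theorem~\ref{thm:mon} by differentiating the power series for $f(t,x)=|M(a+it,b,x)|^2$ with respect to~$t$ term-by-term. Since $M$ is an entire function of its first argument, $f(t,x)$ is entire in~$t$ for each fixed~$x$, so the double series
\[
  f(t,x)=\sum_{m\geq0}\sum_{n\geq0}v_{mn}\frac{t^m}{m!}\frac{x^n}{n!}
\]
converges absolutely on all of $\mathbb{C}^2$ and may be differentiated termwise.

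First I would compute
\[
  \frac{\partial f}{\partial t}(t,x)=\sum_{m\geq1}\sum_{n\geq0}v_{mn}\frac{t^{m-1}}{(m-1)!}\frac{x^n}{n!}.
\]
Under the hypothesis $a\geq b>0$, Theorem~\ref{thm:mon} gives $v_{mn}\geq0$ for all $m,n\geq0$. For $t\geq0$ and $x>0$ every term on the right-hand side is therefore non-negative, and hence $\partial_t f(t,x)\geq0$ on $\mathbb{R}_+$. Consequently $f(\cdot,x)$ is non-decreasing on $\mathbb{R}_+$, and since $|M(a+it,b,x)|=\sqrt{f(t,x)}$ and the square-root is monotone, the same holds for $t\mapsto|M(a+it,b,x)|$.

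There is essentially no obstacle in this deduction; all the work has been absorbed into Theorem~\ref{thm:mon}. The one minor point worth noting is that in fact the monotonicity is strict once $t>0$: because $v_{2n}$ contains the diagonal contribution from the Cauchy product and does not vanish identically, at least one summand with $m=2$ is strictly positive, so $\partial_t f(t,x)>0$ for $t>0$ and $x>0$. This justifies writing ``increasing'' rather than merely ``non-decreasing'' in the statement of the corollary.
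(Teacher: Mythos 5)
Your proof is correct and is exactly the deduction the paper intends: Theorem~\ref{thm:mon} is stated to ``immediately imply'' the corollary, and the implication is precisely the termwise differentiation of the entire series for $f(t,x)$ with nonnegative coefficients $v_{mn}$ that you carry out. Your extra observation that some $v_{2n}$ is strictly positive (e.g.\ $v_{2,2}=4/(b^2(b+1))$), so that the monotonicity is strict for $t>0$, is a valid refinement not made explicit in the paper.
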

\begin{lemma}
The function~$f$ solves the differential equation
\begin{equation}
-4t^2xf(t,x)+\sum_{k=0}^4p_k(x)f^{(0,k)}(t,x)=0,
\end{equation}
where
\begin{align*}
p_0(x)&=-2a(1-3b+2b^2)-2a(1-4b)x-4ax^2\\
p_1(x)&=b-3b^2+2b^3+(2a+b-8ab-6b^2)x+(2+8a+6b)x^2-2x^3,\\ 
p_2(x)&=(5b^2-b)x+(-3-4a-10b)x^2+5x^3,\\
p_3(x)&=(1+4b)x^2-4x^3,\\
p_4(x)&=x^3.
\end{align*}
\end{lemma}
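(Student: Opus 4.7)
The plan is to view $f(t,x) = uv$ where $u := M(a+it,b,x)$ and $v := M(a-it,b,x)$, and derive a linear ODE in $x$ (with $t$ as a parameter) satisfied by this product. Both factors satisfy the confluent hypergeometric equation~\eqref{eq:ode} with shifted parameters:
\[
  xu'' + (b-x)u' - (a+it)u = 0, \qquad xv'' + (b-x)v' - (a-it)v = 0,
\]
where $'$ denotes $\partial/\partial x$. Using these relations, every higher derivative $u^{(k)}$ lies in the $\mathbb{C}(x,t)$-span of $\{u, u'\}$, and analogously for $v$. Consequently each $f^{(0,k)}$ belongs to the four-dimensional $\mathbb{C}(x,t)$-vector space $V := \langle uv,\; u'v,\; uv',\; u'v'\rangle$. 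Since $f, f', f'', f''', f^{(4)}$ are five elements of this four-dimensional space, they are linearly dependent, yielding a fourth-order ODE whose rational coefficients clear to polynomials.

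To locate the $t$-dependence, I would first carry out the lowest reduction. Adding the two Kummer ODEs after multiplying by $v$ and $u$ respectively gives
\[
  u''v + uv'' = \frac{(x-b)f' + 2af}{x},
\]
in which the imaginary shifts cancel, so that
\[
  xf'' = (x-b)f' + 2af + 2x\,u'v'.
\]
This expresses $u'v'$ on the basis $\{f, f', f''\}$. The cross-term $(a+it)(a-it) = a^2 + t^2$ only surfaces when a reduction simultaneously hits a $u''$ with the factor $(a+it)$ and a $v''$ with the factor $(a-it)$; this first happens when one reduces $f'''$ and $f^{(4)}$ modulo $V$. This single source of $t^2$ explains the characteristic $-4t^2 x$ term multiplying $f$ in the statement, and also why no other $p_k$ depends on $t$.

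The remainder is a deterministic elimination: express $f'''$ and $f^{(4)}$ on the basis $\{uv, u'v, uv', u'v'\}$ via repeated substitution of the Kummer ODEs and their $x$-derivatives, then determine the unique (up to scalar) polynomial combination $-4t^2 x f + \sum_{k=0}^{4} p_k(x) f^{(0,k)} = 0$ by collecting coefficients on this basis. Each step is elementary but the bookkeeping is heavy; in practice one would execute this in a computer algebra system (indeed, the holonomic closure algorithm of Salvy--Zimmermann is tailor-made for such a product), and the resulting closed forms for $p_0, \ldots, p_4$ can then be independently verified by substituting the power series of $f$ and checking coefficient-by-coefficient. The main obstacle is therefore not conceptual --- existence of a fourth-order polynomial-coefficient ODE is guaranteed by the dimension count --- but purely organisational: tracking signs and monomials through the cascade of rational reductions to pin down the precise $p_k$.
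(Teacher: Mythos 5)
Your proposal is correct and follows essentially the same route as the paper: both arguments rest on the closure of D-finite (holonomic) functions under products — since each factor $M(a\pm it,b,x)$ satisfies a second-order polynomial-coefficient ODE in $x$, the product and its derivatives lie in the four-dimensional span of $uv,u'v,uv',u'v'$, forcing a fourth-order ODE whose coefficients are then pinned down by computer algebra. The paper simply cites Mathematica's \verb|DifferentialRootReduce| for this elimination, which is the same mechanical step you delegate to a holonomic-closure algorithm.
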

\begin{proof}
Note that both $M(a+it,b,x)$ and $M(a-it,b,x)$ satisfy  second-order differential equations with polynomial coefficients,
namely the corresponding confluent hypergeometric differential equations.
Thus $f(t,x)$ also satisfies an ODE (with respect to~$x$) with polynomial coefficients.
In the combinatorial and symbolic computation literature, such functions
are called holonomic, or $D$-finite~\cite{Ze90b}.
The ODE for~$f$ can be computed with Mathematica by the command
 \verb|DifferentialRootReduce|.
\end{proof}
Some computations in the following proofs are not given in detail,
because they can be easily done with a computer algebra system.
For ease of notation we allow negative indices
and set $v_{mn}=0$ for $m<0$ or $n<0$.
\begin{lemma}
The power series coefficients of~$f$ satisfy the recursion
\begin{equation}\label{RecA}
A_{-1,n}v_{m,n+1}+A_{0,n}v_{m,n}+A_{1,n}v_{m,n-1}+A_{2,n}v_{m,n-2}=4nm(m-1)v_{m-2,n-1}
\end{equation}
with
\begin{align*}
A_{-1,n}&=b-3b^2+2b^3+(1-5b+5b^2)n+(-2+4b)n^2+n^3\\
A_{0,n}&=6ab-4ab^2-2a+(6a+11b-5-8ab-6b^2)n\\
&\quad+(9-4a-10b)n^2-4n^3\\
A_{1,n}&=(8-10a-6b+8ab)n+(-13+8a+6b)n^2+5n^3\\
A_{2,n}&=(-4+4a)n+(6-4a)n^2-2n^3.
\end{align*}
Use of our negative index convention shows that the recursion holds for $m\geq0$, $n\geq0$.
\end{lemma}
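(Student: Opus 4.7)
The plan is to derive the recursion by substituting the Taylor expansion
\[
  f(t,x) = \sum_{m,n\geq 0} v_{mn}\, \frac{t^m}{m!}\frac{x^n}{n!}
\]
directly into the ODE
\[
  -4 t^2 x\, f(t,x) + \sum_{k=0}^4 p_k(x)\, f^{(0,k)}(t,x) = 0
\]
of the preceding lemma, and reading off the coefficient of $t^m x^n/(m!\,n!)$.

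First I would use the shift identity $f^{(0,k)}(t,x) = \sum_{m,n\geq 0} v_{m,n+k}\, t^m x^n/(m!\,n!)$, so that multiplying by a monomial $x^j$ and extracting coefficients yields
\[
  [t^m x^n/(m!\,n!)]\bigl(x^j f^{(0,k)}(t,x)\bigr) = \frac{n!}{(n-j)!}\, v_{m,n+k-j},
\]
with the convention $n!/(n-j)! = 0$ for $n<j$ (consistent with the paper's convention $v_{m,*}=0$ for a negative second index). Writing $p_k(x)=\sum_{j} p_{k,j} x^j$ and summing, the contribution of $\sum_k p_k(x) f^{(0,k)}$ to the coefficient of $t^m x^n/(m!\,n!)$ becomes
\[
  \sum_{k=0}^{4}\sum_{j=0}^{3} p_{k,j}\,\frac{n!}{(n-j)!}\, v_{m,n+k-j}.
\]

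Next I would group these terms by the shift $\ell := k-j$. A quick inspection of the explicit $p_k$ shows that the only shifts producing nonzero contributions are $\ell\in\{-2,-1,0,1\}$, matching the four terms on the left of~\eqref{RecA}. For each such $\ell$, the coefficient of $v_{m,n+\ell}$ equals the polynomial $\sum_{k-j=\ell} p_{k,j}\, n!/(n-j)!$ in $n$. Inserting the explicit $p_{k,j}$ and collecting powers of $n$, these four polynomials coincide respectively with $A_{-1,n}$ (for $\ell=1$), $A_{0,n}$ ($\ell=0$), $A_{1,n}$ ($\ell=-1$) and $A_{2,n}$ ($\ell=-2$). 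Finally, the term $-4 t^2 x\, f(t,x)$ contributes $-4\, m(m-1)\, n\, v_{m-2,n-1}$ to the coefficient of $t^m x^n/(m!\,n!)$, yielding the right-hand side of~\eqref{RecA} after transposition.

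The main (non-)obstacle is routine polynomial bookkeeping: for each of the four shifts one must sum at most four contributions of the form $p_{k,j}\, n!/(n-j)!$ and verify that the resulting cubic in $n$ matches the stated $A_{\ell,n}$ coefficient-by-coefficient. There is no conceptual difficulty — the boundary behavior for small $n$ is automatic because $n!/(n-j)! = n(n-1)\cdots(n-j+1)$ vanishes whenever $j>n$, so the lemma's negative-index convention fits seamlessly — but the arithmetic is tedious and is most conveniently confirmed symbolically, in line with the use of computer algebra elsewhere in the paper.
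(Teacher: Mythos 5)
Your proposal is correct and follows essentially the same route as the paper: both extract the coefficient of $t^m x^n/(m!\,n!)$ from the ODE via the rule $[t^m x^n/(m!\,n!)]\,t^\ell x^j f^{(0,k)} = (m-\ell+1)_\ell\,(n-j+1)_j\, v_{m-\ell,\,n+k-j}$ (your $n!/(n-j)!$ is exactly $(n-j+1)_j$) and then collect terms by the shift $k-j$, with the negative-index convention handling small $n$ automatically. A spot check (e.g.\ the shift $k-j=1$ giving $b-3b^2+2b^3+(5b^2-5b+1)n+(4b-2)n^2+n^3=A_{-1,n}$) confirms the bookkeeping.
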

\begin{proof}
We extract coefficients from the differential equation by
\begin{equation*}
\left[\frac{t^m}{m!}\frac{x^n}{n!}\right]t^\ell x^k f^{(j)}(t,x)=(m-\ell+1)_\ell\ (n-k+1)_k\ v_{m-\ell,n-k+j}
\end{equation*}
and with our convention for negative indices this equation is true for all 
$m\geq0$, $n\geq0$, $\ell\geq0$, $k\geq0$, $j\geq0$.
Then we collect terms.
\end{proof}

Let us introduce the differences
\begin{equation*}
v'_{m,n}=v_{m,n}-v_{m,n-1},\quad
v''_{m,n}=v_{m,n}'-v_{m,n-1}',\quad
v'''_{m,n}=v_{m,n}''-v_{m,n-1}'',
\end{equation*}
and
\begin{equation}\label{u}
u_{m,n}'=4nm(m-1)v_{m-2,n-1}-4(n-1)m(m-1)v_{m-2,n-2}.
\end{equation}
Conversely
\begin{equation}\label{partial}
v_{m,n}''=v_{m,n-1}''+v'''_{m,n},
v_{m,n}'=v_{m,n-1}'+v''_{m,n},\quad
v_{m,n}=v'_{m,n}+v_{m,n-1}.
\end{equation}
\begin{lemma}
The differences satisfy the recursion
\begin{equation}\label{RecG}
G_{-1,n}v_{m,n+1}'''=G_{0,n}v_{m,n}'''+G_{1,n}v_{m,n-1}''+G_{2,n}v_{m,n-2}'+G_{3,n}v_{m,n-3}+u_{m,n}'
\end{equation}
with
\begin{align*}
G_{-1,n}&=b-3b^2+2b^3+(1-5b+5b^2)n+(-2+4b)n^2+n^3\\
G_{0,n}&=2a+7b-4-6ab+b^2+4ab^2-4b^3+(10-6a-9b+8ab-4b^2)n\\
&\quad +(4a+2b-8)n^2+2n^3\\
G_{1,n}&=6-6a+3b-4ab+8ab^2-6b^3+(6a-5b+8ab-3b^2-10)n\\
&\quad+(4+2b)n^2\\
G_{2,n}&=-2-4b+2ab+4ab^2-2b^3+(2+4b+b^2)n\\
G_{3,n}&=b^2.
\end{align*}
With our negative index convention this recursion is valid for $m\geq0$, $n\geq1$.
\end{lemma}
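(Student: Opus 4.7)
The plan is to derive~\eqref{RecG} from~\eqref{RecA} by taking a backward difference in~$n$ and re-expressing everything in the basis
\[
  \{v'''_{m,n+1},\ v'''_{m,n},\ v''_{m,n-1},\ v'_{m,n-2},\ v_{m,n-3}\}
\]
of iterated backward differences. Let $W_n$ denote the left-hand side of~\eqref{RecA}, so that $W_n = 4nm(m-1)v_{m-2,n-1}$. The key observation is that
\[
  W_n - W_{n-1} = 4nm(m-1)v_{m-2,n-1} - 4(n-1)m(m-1)v_{m-2,n-2} = u'_{m,n},
\]
by the definition of $u'_{m,n}$ in~\eqref{u}. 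Thus it suffices to expand $W_n - W_{n-1}$ in the basis above and read off the coefficients, which are to be matched with the right-hand side of~\eqref{RecG}.

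The change of basis is obtained by iterating the identities in~\eqref{partial}, yielding the binomial-type formulas
\begin{align*}
  v_{m,n-2} &= v'_{m,n-2} + v_{m,n-3}, \\
  v_{m,n-1} &= v''_{m,n-1} + 2v'_{m,n-2} + v_{m,n-3}, \\
  v_{m,n}   &= v'''_{m,n} + 3v''_{m,n-1} + 3v'_{m,n-2} + v_{m,n-3}, \\
  v_{m,n+1} &= v'''_{m,n+1} + 3v'''_{m,n} + 6v''_{m,n-1} + 4v'_{m,n-2} + v_{m,n-3}.
\end{align*}
Substituting these into $W_n$ and into $W_{n-1}$ (for the latter, each lower-index $v_{m,k}$ is expanded analogously) and collecting terms writes $W_n - W_{n-1}$ as a linear combination in the target basis. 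The coefficient of $v'''_{m,n+1}$ is immediately $A_{-1,n}$, which gives $G_{-1,n} = A_{-1,n}$; the remaining four coefficients are explicit polynomial combinations of the $A_{i,n}$ and $A_{i,n-1}$ for $i \in \{-1,0,1,2\}$.

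The final step is to verify the polynomial identities
\[
  G_{0,n} = A_{-1,n-1} - 3A_{-1,n} - A_{0,n},
\]
together with the analogous expressions for $G_{1,n}, G_{2,n}, G_{3,n}$, as polynomials in $a, b, n$. Matching these with the explicit formulas stated in the lemma is routine polynomial arithmetic, which (as noted earlier in this section) is easily handled by a computer algebra system. As a quick sanity check, the coefficient of $v_{m,n-3}$ is $(A_{-1,n} + A_{0,n} + A_{1,n} + A_{2,n}) - (A_{-1,n-1} + A_{0,n-1} + A_{1,n-1} + A_{2,n-1})$; because this sum is linear in~$n$ with slope $-b^2$ (the cubic and quadratic contributions cancel on inspection), the backward difference equals $-b^2$, producing $G_{3,n} = b^2$ after moving the term to the right-hand side.

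The main obstacle is not conceptual but the bookkeeping of polynomial coefficients of degree up to three in~$n$, together with the careful tracking of coefficients coming from $W_n$ versus $W_{n-1}$. The negative-index convention $v_{m,k} = 0$ for $k < 0$ guarantees that~\eqref{RecG} makes sense and is valid for all $m \geq 0$ and $n \geq 1$, as asserted.
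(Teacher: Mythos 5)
Your proposal is correct and is essentially the paper's own proof, which simply says to take the difference of the recursion \eqref{RecA} at $n$ and $n-1$ and rearrange terms; you supply the explicit change of basis to iterated backward differences and the resulting coefficient identities (e.g.\ $G_{-1,n}=A_{-1,n}$ and $G_{0,n}=A_{-1,n-1}-3A_{-1,n}-A_{0,n}$), which check out, and deferring the remaining polynomial verification to a computer algebra system matches the paper's stated convention for this section.
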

\begin{proof}
Take the difference of (\ref{RecA}) for $n$ and $n-1$ and rearrange terms. 
\end{proof}
\begin{lemma}
Suppose $a\geq b>0$ and $n\geq2$. Then 
\begin{equation}\label{positive}
G_{-1,n}\geq0,\quad
G_{0,n}\geq0,\quad
G_{1,n}\geq0,\quad
G_{2,n}\geq0,\quad
G_{3,n}\geq0.
\end{equation}
\end{lemma}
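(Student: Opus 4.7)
The plan is to treat each of $G_{-1,n},\ldots,G_{3,n}$ as an explicit polynomial in the parameters $a,b,n$ and dispose of the five inequalities one by one. A crucial structural observation is that every $G_{i,n}$ is \emph{linear} in $a$, so the assumption $a \geq b$ can be encoded by substituting $a = b + c$ with $c \geq 0$, giving the splitting
\[
G_{i,n}(a,b,n) = G_{i,n}(b,b,n) + c\,\frac{\partial G_{i,n}}{\partial a}.
\]
It then suffices to show that both pieces are non-negative for $n \geq 2$ and $b > 0$, at which point the lemma follows by multiplying the second piece by $c \geq 0$ and adding.

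The two painless cases come first. Trivially $G_{3,n} = b^2 \geq 0$, and for $G_{-1,n}$ a direct expansion verifies the factorization
\[
G_{-1,n} = (n+b-1)(n+b)(n+2b-1),
\]
whose positivity for $n \geq 1$ and $b > 0$ is immediate since each factor is at least $b>0$.

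For the remaining three, the same template applies. Write out $G_{i,n}(b,b,n)$ and $\partial_a G_{i,n}$, note that the leading coefficient in $n$ is strictly positive (namely $4+2b$ for $G_{1,n}\big|_{a=b}$, $2$ for $G_{0,n}\big|_{a=b}$, and $2+4b+b^2$ for $G_{2,n}\big|_{a=b}$), and then establish that each of the six resulting polynomials in $n$ is non-decreasing on $[2,\infty)$ via the derivative test, so that the minimum is attained at $n = 2$. Representative closed forms at the endpoint are
\[
G_{2,n}\big|_{a=b,\,n=2} = 2(1+b)(1+b+b^2), \quad \partial_a G_{2,n} = 2b(1+2b),
\]
\[
G_{1,n}\big|_{a=b,\,n=2} = 2+7b+6b^2+2b^3, \quad \partial_a G_{1,n}\big|_{n=2} = 6+12b+8b^2,
\]
\[
G_{0,n}\big|_{a=b,\,n=2} = 3b(1+b), \quad \partial_a G_{0,n}\big|_{n=2} = 6+10b+4b^2,
\]
each visibly non-negative for $b > 0$. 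The monotonicity-in-$n$ step itself reduces to checking that each first derivative is positive at $n = 2$ and that each second derivative is non-negative on $[2,\infty)$, which are again elementary polynomial inequalities in $b \geq 0$.

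The only real obstacle is bookkeeping: after the $a = b+c$ decomposition one is faced with a handful of low-degree polynomial inequalities in $b$ and $n$, and it is essential to verify not only non-negativity at $n = 2$ but also the location of the critical points (for $G_{0,n}\big|_{a=b}$, a cubic, and $G_{1,n}\big|_{a=b}$, a quadratic, both of which must be shown to have no interior minimum on $[2,\infty)$). No step is conceptually deep; every verification can be carried out by hand or, for reassurance, confirmed with a computer algebra system in the same spirit as the rest of the appendix.
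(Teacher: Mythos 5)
Your proof is correct and is precisely the ``elementary analysis of the polynomials'' that the paper invokes without giving any detail (its proof simply defers to Mathematica's \texttt{Simplify} and \texttt{Reduce}). I checked the key identities --- the factorization $G_{-1,n}=(n+b-1)(n+b)(n+2b-1)$, the linearity of each $G_{i,n}$ in $a$, and the endpoint values such as $G_{0,n}\big|_{a=b,\,n=2}=3b(1+b)$ and $G_{1,n}\big|_{a=b,\,n=2}=2+7b+6b^2+2b^3$ --- and they all hold, as do the monotonicity-in-$n$ claims on $[2,\infty)$.
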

\begin{proof}
This follows from elementary analysis of the polynomials, or mechanically using
the Mathematica commands \verb|Simplify| and \verb|Reduce|; see also
\verb|CyclicDecoposition|.
\end{proof}
We can now prove the main results of this section
(Theorem~\ref{thm:mon} and its corollary).
\begin{proof}[Proof of Theorem~\ref{thm:mon}]
To show $v_{m,n}\geq0$ for all $m\geq0$, $n\geq0$ when $a\geq b>0$ we perform a nested induction. 
The outer induction is with respect to $m\geq0$, and the inner one with respect to $n\geq0$. There is a little
difficulty involved concerning
\begin{equation*}
v_{0,1}'''=\frac{2a}{b}-3,
\end{equation*}
which is the only term in the induction that can be negative. For ease of notation let
\begin{equation*}
\tilde v_{m,n}'''=\begin{cases}
0&m=0,n=1\\
v_{m,n}'''&\text{otherwise}.
\end{cases}
\end{equation*}
Let us define the statement
\begin{equation*}
\mathcal B(m,n)\ \equiv\ \big( 
\tilde v_{m,k}'''\geq0,\
v_{m,k}''\geq0,\
v_{m,k}'\geq0,\
v_{m,k}\geq0,\
\forall k\leq n\big).
\end{equation*}
Recall that we work under the assumption $a\geq b>0$.
If $m<0$ or $n<0$ or $m\equiv1\pmod 2,$ then $\mathcal B(m,n)$ is trivially true
due to our negative index convention.

Step $m=0$: For $n<0$
\begin{equation*}
\tilde v_{0,n}'''=0,\quad
v_{0,n}''=0,\quad
v_{0,n}'=0,\quad
v_{0,n}=0,
\end{equation*}
trivially. For $n=0$ we have
\begin{equation*}
\tilde v_{0,0}'''=1,\quad
v_{0,0}''=1,\quad
v_{0,0}'=1,\quad
v_{0,0}=1.
\end{equation*}
For $n=1$ we have
\begin{equation*}
\tilde v_{0,1}'''=0,\quad
v_{0,1}''=\frac{2a}{b}-2\geq0,\quad
v_{0,1}'=\frac{2a}{b}-1\geq0,\quad
v_{0,1}=\frac{2a}{b}\geq0.
\end{equation*}
For $n=2$ we have
\begin{equation*}
\tilde v_{0,2}'''=\frac{2 a (2 a b+a+b)}{b^2 (b+1)}-\frac{6 a}{b}+3\geq0,\quad
v_{0,2}''=\frac{2 a (2 a b+a+b)}{b^2 (b+1)}-\frac{4 a}{b}+1\geq0,
\end{equation*}
and
\begin{equation*}
v_{0,2}'=\frac{2 a (2 a b+a+b)}{b^2 (b+1)}-\frac{2 a}{b}\geq0,\quad
v_{0,2}=\frac{2 a (2 a b+a+b)}{b^2 (b+1)}\geq0.
\end{equation*}
So far we have shown $\mathcal B(0,n)$ for $n\leq 2$. 
Now the recursion can be applied. 
Note that $u_{0,n}'=0$ for all $n\in\mathbb Z$.
Suppose the hypothesis $\mathcal B(0,n)$ is true. Then we can show $\mathcal B(0,n+1)$
by the recursion~(\ref{RecG}), property (\ref{positive}) and (\ref{partial}).

We are at the basis of the outer induction, namely we have shown $\mathcal B(m,n)$
for all $m\leq0$ and $n\in\mathbb Z$. Recall that all coefficients
are zero when $m\equiv1\pmod2$ and we must show it for even $m\geq2$.

Suppose the outer induction hypothesis $\mathcal B(m-2,n)$ holds for some $m\geq2$ 
and all $n\in\mathbb Z$.
Note that $\mathcal B(m,n)$ holds trivially for $n<0$. Now we have
\begin{equation*}
v_{m,n}'''=0,\quad
v_{m,n-1}''=0,\quad
v_{m,n-2}'=0,\quad
v_{m,n-3}=0,\quad
n=0,\ldots,m-1.
\end{equation*}
So we can use $\mathcal B(m,n)$ for $n\leq m-1$ as inner induction basis.

Suppose the inner induction hypothesis $\mathcal B(m,n)$ is true. Then
inspection of~\eqref{u} shows that
\begin{equation*}
v_{m-2,n-1}'\geq0 \Rightarrow v_{m-2,n-1}\geq v_{m-2,n-2} \Rightarrow
u_{m,n}'\geq0.
\end{equation*}
Thus all terms in (\ref{RecG}) are non-negative and $\mathcal B(m,n+1)$ is true.
This concludes the induction.
\end{proof}
\begin{remark}
We conjecture that the conclusions of Theorem~\ref{thm:mon} and
Corollary~\ref{cor:mon} are true
for all $a\geq0$, $b>0$ and $x>0$. Some partial results can be obtained
by similar methods as above for the case $0\leq a<b$, but the  statements
we could obtain so far are involved and unsatisfactory.
\end{remark}

\section{The $a$-zeros are simple and negative}\label{se:zeros}

For $b,x>0,$ the $a$-zeros of $M(a,b,x)$ are simple and located
on the negative real line.
This follows from applying Sturm-Liouville theory
to the ODE~\eqref{eq:ode}; see Propositions~1 and~2
in~\cite{Li04} and the references given there.
We now give an alternative proof of this fact, which is inspired by 
a similar proof concerning the Bessel function~$J_{\nu}$
(see p.~482 in~\cite{Wa95}).
\begin{proposition}\label{prop:zeros}
  Let $b,x>0.$ Then all $a$-zeros of $M(a,b,x)$ are negative real and simple.
\end{proposition}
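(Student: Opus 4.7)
The plan is to mimic the classical Sturm--Liouville argument used for Bessel functions. First I would rewrite the confluent hypergeometric ODE~\eqref{eq:ode} in self-adjoint form. Multiplying by $x^{b-1}e^{-x}$, the equation becomes
\[
  \bigl(t^{b}e^{-t} M'(t)\bigr)' = a\,t^{b-1}e^{-t} M(t),
\]
where the prime denotes the $t$-derivative and $M(t)=M(a,b,t)$. Fixing $x>0$ and two parameter values $a,a'$, and writing $u(t)=M(a,b,t)$, $v(t)=M(a',b,t)$, integration by parts on $[0,x]$ yields the identity
\begin{equation}\label{eq:green}
  x^{b}e^{-x}\bigl(v(x)u'(x)-u(x)v'(x)\bigr) = (a-a')\int_0^x t^{b-1}e^{-t} u(t)v(t)\,dt,
\end{equation}
the boundary contribution at $t=0$ vanishing because $b>0$ while $u,v,u',v'$ remain bounded there.

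\emph{Reality of the zeros.} Suppose $a_0\in\mathbb{C}\setminus\mathbb{R}$ is a zero of $M(\cdot,b,x)$. Since the coefficients of the power series~\eqref{eq:M series} are real for real $b$, we have $M(\overline{a_0},b,x)=\overline{M(a_0,b,x)}=0$. Applying \eqref{eq:green} with $a=a_0$, $a'=\overline{a_0}$ and $v(t)=\overline{u(t)}$ kills the left-hand side, leaving
\[
  (a_0-\overline{a_0})\int_0^x t^{b-1}e^{-t}|M(a_0,b,t)|^2\,dt=0.
\]
The integral is strictly positive because $M(a_0,b,0)=1$, forcing $a_0=\overline{a_0}$, a contradiction.

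\emph{Negativity.} If $a\geq 0$, every term in the series~\eqref{eq:M series} is non-negative, and the constant term equals~$1$, so $M(a,b,x)\geq 1$. Hence no non-negative $a$ can be a zero.

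\emph{Simplicity.} Suppose $a_0<0$ were a double zero, meaning $M(a_0,b,x)=0$ and $M'(a_0,b,x)=0$, with $M'$ denoting the derivative in the first parameter. Using \eqref{eq:green} with $a'=a_0$, dividing by $a-a_0$ and letting $a\to a_0$, one obtains
\[
  -x^{b}e^{-x}\,M'(a_0,b,x)\,\partial_t M(a_0,b,x) = \int_0^x t^{b-1}e^{-t}\,M(a_0,b,t)^2\,dt.
\]
The right-hand side is strictly positive (the integrand is continuous, non-negative, and equals $t^{b-1}e^{-t}$ near $t=0$), while the assumption $M'(a_0,b,x)=0$ makes the left-hand side vanish. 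Contradiction.

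The main step to get right is the orthogonality identity~\eqref{eq:green}; everything else is an easy sign argument. The only subtlety I foresee is checking that the boundary term at $t=0$ genuinely vanishes (using $b>0$ and the regularity of $M$ at the origin), and confirming that in the simplicity argument the quotient $(M(a,b,x)-M(a_0,b,x))/(a-a_0)$ converges to $M'(a_0,b,x)$ uniformly enough to pass to the limit under the integral on the right-hand side, which is immediate from the entire-function dependence of $M$ on~$a$.
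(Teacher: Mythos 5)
Your proposal is correct and follows essentially the same route as the paper: both reduce to the identity $x^{b}e^{-x}\bigl(M(a,b,x)\,\partial_x M(a',b,x)-M(a',b,x)\,\partial_x M(a,b,x)\bigr)=(a'-a)\int_0^x t^{b-1}e^{-t}M(a,b,t)M(a',b,t)\,dt$, use positivity of the series for $a\geq0$, conjugate pairs for reality, and the limit $a'\to a$ for simplicity. The only (cosmetic) difference is that you derive the identity from the self-adjoint form $(t^{b}e^{-t}M')'=a\,t^{b-1}e^{-t}M$ by direct integration by parts, while the paper first applies the Liouville substitution $y=x^{b/2}e^{-x/2}M$ and quotes a Wronskian formula from Watson.
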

\begin{proof}
   First, observe that~\eqref{eq:M series} is an increasing function
of~$a\geq0,$ and since $M(0,b,x)=1,$ we see that there are no
$a$-zeros in $[0,\infty).$
  The function $y(x):=x^{b/2}e^{-x/2} M(a,b,x)$ satisfies the differential equation
  \[
    y''+P y=0,\quad P:=-\frac14+\frac{\tfrac12b-a}{x}+
    \frac{2b-b^2}{x^2},
  \]
  and  $\eta(x):=x^{b/2}e^{-x/2} M(\bar{a},b,x),$ where $\bar a$ denotes the
  complex conjugate, satisfies
  \[
    \eta''+Q \eta=0,\quad Q:=-\frac14+\frac{\tfrac12b-\bar a}{x}+
    \frac{2b-b^2}{x^2}.
  \]
  Then (see p.~133 in~\cite{Wa95})
  \[
     \int^x(P-Q)y\eta\,  dx = y \frac{d\eta}{dx}-\eta \frac{dy}{dx},
  \]
  and so, with $E:=x^{b/2}e^{-x/2},$
  \begin{align*}
    (\bar a -a)&\int^x t^{b-1}e^{-t}M(a,b,t)M(\bar a,b,t)dt\\
    &= E M(a,b,x)\big( E M(\bar a,b,x)\big)'
    - E M(\bar a,b,x)\big( E M( a,b,x)\big)' \\
    &= E^2\Big(
      M(a,b,x) \frac{d}{dx}M(\bar a,b,x)-M(\bar a,b,x)\frac{d}{dx}M( a,b,x)
    \Big).
  \end{align*}
  Therefore, for $a\in\mathbb{C}\setminus\mathbb{R},$
  \begin{multline}\label{eq:int M M}
    \int_0^x  t^{b-1}e^{-t}M(a,b,t)M(\bar a,b,t)dt \\
    =\frac{x^b e^{-x}}{\bar a-a}\Big(
      M(a,b,x) \frac{d}{dx}M(\bar a,b,x)-M(\bar a,b,x)\frac{d}{dx}M( a,b,x)
    \Big).
  \end{multline}
  Now let~$a$ be a zero of $M(a,b,x)$. If~$a$ is non-real, then the right-hand
  side of~\eqref{eq:int M M} vanishes, since $a\neq \bar a$ and $\bar a$ is a zero as well,
  but the left-hand side becomes
  \[
    \int_0^x  t^{b-1}e^{-t}|M(a,b,t)|^2dt >0,
  \]
  a contradiction.
  
  We now show that the $a$-zeros are simple.
 Analogously to~\eqref{eq:int M M},
  we have
   \begin{multline*}
    \int_0^x  t^{b-1}e^{-t}M(a,b,t)M(a',b,t)dt \\
    =\frac{x^b e^{-x}}{a'-a}\Big(
      M(a,b,x) \frac{d}{dx}M(a',b,x)-M(a',b,x)\frac{d}{dx}M( a,b,x)
    \Big)
  \end{multline*}
  for $a\neq a'$. Let $a<0$ be a zero, and $a'=a+h$ with $h\to 0.$ Then
  \begin{align*}
      \int_0^x  t^{b-1}e^{-t}M(a,b,t)^2dt &=
      -\lim_{h\to0} \frac{x^b e^{-x}}{h} M(a+h,b,x)\, \frac{d}{dx} M(a,b,x) \\
      &= -x^b e^{-x} \frac{d}{da} M(a,b,x)\, \frac{d}{dx} M(a,b,x),
  \end{align*}
  which shows that $(d/da) M(a,b,x)$ does not vanish.
\end{proof}

\bibliographystyle{siam}
\bibliography{literature}

\end{document}